\newtheorem{theorem}{Theorem}[section]
\newtheorem{corollary}[theorem]{Corollary}
\newtheorem{lemma}[theorem]{Lemma}
\newtheorem{proposition}[theorem]{Proposition}
\newtheorem{definition}[theorem]{Definition}
\newtheorem{properties}[theorem]{Properties}
\numberwithin{equation}{section}
\DeclareMathOperator*{\supess}{sup\,ess}
	\author {Ahmed Saoudi $^{ \orcidlink{0000-0003-4422-2054}}$}}
	\title{Quadratic-Phase Dunkl Transform: Fundamental properties, translation operators, convolution product  and HUP}
	\date{}
\begin{document}
		\maketitle
		\begin{center}
	Department of Mathematics, College of Science, Northern Border University, Arar, Saudi Arabia\\
			Email: ahmed.saoudi@ipeim.rnu.tn\\
			
		\end{center}
			\begin{abstract}
					
In this paper, we introduce and study the quadratic-phase Dunkl transform, a novel integral transform on the real line parameterized by five real numbers \((a, b, c, d, e)\) and a multiplicity parameter \(\mu \geq -1/2\). We define the transform and establish its fundamental properties, including continuity, a Riemann–Lebesgue lemma, linearity, scaling, and, most importantly, a reversibility theorem and an associated Parseval/Plancherel formula. We show that this novel quadratic-phase integral type transform generalizes a wide class of known transforms, such as  the quadratic-phase Fourier-Bessel transform, the quadratic-phase Fourier transform, the linear canonical Dunkl transform, the fractional Dunkl transform, and the classical Dunkl transform, by choosing the appropriate specialization of its parameters. Furthermore, we introduce and investigate a corresponding quadratic-phase Dunkl translation operator and a convolution structure, proving their basic properties and a Young's inequality. Finally, we establish a new Heisenberg-type uncertainty principle for the quadratic-phase Dunkl transform, which extends the classical uncertainty principle for the a large class of integral type transforms.
					 \\
					 \\
						\textbf{ Keywords}. Quadratic-phase Dunkl transform;  Quadratic-phase Dunkl translation operator;  Quadratic-phase Dunkl convolution product; Heisenberg’s uncertainty principle.\\
						\textbf{Mathematics Subject Classification}. Primary  47G10; Secondary 42B10.
				\end{abstract}

\section{Introduction}
The Fourier transform played a central role in harmonic analysis, signal processing, and mathematical physics, providing an indispensable tool for analyzing functions in the frequency domain. However, the increasing complexity of modern applications, particularly in optics, quantum mechanics, and the analysis of non-stationary signals, has necessitated the development of more flexible transforms that can adapt to various physical and mathematical contexts. This evolutionary path has led to numerous generalizations of the classical Fourier transform, each designed to address specific limitations or to incorporate additional degrees of freedom.

Among these generalizations, the quadratic-phase Fourier transform has emerged as a particularly powerful and versatile extension. Introduced by Castro et al. in 2018 \cite{castro2014quadratic, castro2018new}, the quadratic-phase Fourier transform incorporates five real parameters that provide enhanced control over the transform's phase structure, enabling superior analysis of chirp-type signals and systems with quadratic phase variations. The quadratic-phase Fourier transform of a function $f \in L^1(\mathbb{R})$ is defined as:

\begin{equation*}
	\mathcal{Q}^{a,b,c}_{d,e}[f](w) = \frac{1}{\sqrt{2\pi}} \int_{\mathbb{R}} \mathcal{K}^{a,b,c}_{d,e}(w,v) f(v)  dv,
\end{equation*}

where the kernel is given by:

\begin{equation}\label{kernelQPFT}
	\mathcal{K}^{a,b,c}_{d,e}(w,v) = \exp\left\{-i(av^2 + cw^2 + bwv + dv + ew)\right\}.
\end{equation}

The parameters $a,b,c,d,e \in \mathbb{R}$ with $b \neq 0$ offer remarkable flexibility, allowing the quadratic-phase Fourier transform to encompass numerous classical transforms as special cases:

\begin{itemize}
	\item When $a = c = d = e = 0$ and $b = 1$, we recover the classical Fourier transform.
	
	\item 	When $d = e = 0$, $a = c = -\tfrac{\cot \theta}{2}$, $b = \csc \theta$ (where $\theta \neq n\pi$, $n \in \mathbb{Z}$), then  the quadratic-phase Fourier transform (amplifying by  $\sqrt{1-i \cot \theta}$) is reduced to  the fractional Fourier transform \cite{mcbride1987namias}, 
	
\begin{equation*}
		\mathscr{F}^\alpha[f](\omega)=\int_{\mathbb{R}} f(v) \mathcal{K}_\alpha(w, v) d v,
\end{equation*}
 where the kernel  $\mathcal{K}_\alpha(w, v)$   is given by 
 
	\begin{equation}\label{defFrFT}
		\mathcal{K}_\theta(w, v) = \begin{cases}
			\sqrt{\tfrac{1-i \cot \theta}{2\pi}} \exp\left\{\tfrac{i}{2}(w^2+v^2) \cot \theta - i\omega v \csc \theta\right\}, & \theta \neq k\pi \\
			\delta(v-w), & \theta = 2k\pi \\
			\delta(v+w), & \theta = (2k+1)\pi.
		\end{cases}
	\end{equation}
	
	\item  If $d=e=0$, $b\neq 0$, and under  the transformations $a \mapsto - a/2b$, $b \mapsto 1/b$ and $c \mapsto - c/2b$, then   the quadratic-phase Fourier transform (amplifying by  $\tfrac{1}{\sqrt{ib}}$) is reduced to the linear canonical transform \cite{recentdevLCT2007}
	
	\begin{equation}
		\mathcal{L}[f](w) = \frac{1}{\sqrt{2\pi i b}} \int_{\mathbb{R}} f(v) \exp\left\{\frac{i(av^2 - 2ww +  cw^2)}{2b}\right\}  dv.
	\end{equation}
\end{itemize}

The quadratic-phase Fourier transform  is a recent addition to the class of integral transforms, encompassing various signal processing tools such as the Fourier transform, fractional Fourier transform and linear canonical transform \cite{shah2021short, shah2021uncertainty, shah2023sampling}. 
The quadratic-phase Fourier transform is particularly noted for its additional degrees of freedom, which enhance its performance in time-frequency analysis applications \cite{zayed2025discrete}.

The quadratic phase Fourier transform was developed and studied in various setting, including wavelet transform \cite{prasad2020quadratic, shah2022quadratic, sharma2023quadratic}, uncertainty principles \cite{castro2023uncertainty, castro2025multidimensional, shah2021uncertainty},  quaternion domains 
\cite{ahmad2025short, bhat2024novel, gargouri2024novel, gupta2024short,  dar2207towards,  zayed2025discrete}, octonion domains \cite{kumar2024octonion, lone2025analysis}, short-time quadratic-phase setting \cite{ahmad2025short, gupta2024short, shah2021short}, Stockwell domains \cite{dar2024convolution, gargouri2024novel}, Bessel setting \cite{QFFBT}, Fourier--Jacobi framework \cite{QPJFT}, Jacobi Dunkl setting \cite{QPJDT} and Hankel setting \cite{prasad2020convolution, roy2025pseudo, roy2025wavelet}.

Despite these advances, the quadratic-phase Fourier transform operates within the conventional Euclidean setting and does not account for systems with underlying symmetries or weight functions. The limitation arising from this motivates our present work, introducing the quadratic-phase Dunkl transform, a novel generalization synergistically fusing the quadratic-phase Fourier transform's parametric phase structure together with the rich theoretical framework of Dunkl operators associated with reflection groups. This unification gives one a strong analytical tool capable of handling both quadratic phase variations and symmetric structures in weighted function spaces.

The Dunkl transform, a generalization of the Fourier transform to spaces invariant under the action of finite reflection groups, has recently found extensive applications in multivariate special functions, quantum physics, and studies on differential-difference equations. Including Dunkl operators in the quadratic-phase Fourier transform framework gives us a transform that encompasses both the classical quadratic-phase Fourier transform and the Dunkl transform as special cases but also offers new possibilities for the analysis of signals and systems with reflection symmetries in weighted spaces.

In the present article, we introduce and study the  quadratic-phase Dunkl  transform that depend on five real parameters \( a, b, c, d, e \) and multiplicity function $\mu\geq -1/2$, where $b\neq 0$. The quadratic-phase Dunkl transform of an integrable function $f$ on the real line as

\begin{equation*}
	\mathcal{D}^{a,b,c}_{d,e,\mu}[f](w)= \frac{c_\mu}{( i b)^{\mu+1}} \int_{\mathbb{R}} \Psi^{a,b,c}_{d,e,\mu}(w,v) f(v)|v|^{2 \mu+1} d v,
\end{equation*}
where $\Psi^{a,b,c}_{d,e,\mu}(w,v)$ denotes the quadratic-phase Dunkl kernel, given by

\begin{equation*}
	\Psi^{a,b,c}_{d,e,\mu}(w,v)= e^{-i(av^2+cw^2+dv+ew)} E_{\mu}(-i w / b, v),
\end{equation*}
with $E_{\mu}(w,v)$ is the Dunkl kernel  given by the relation (\ref{dkernel}).

The quadratic-phase Dunkl transform serves as a unifying framework for a wide class of integral transforms in harmonic analysis. By making appropriate choices of the parameters \(a, b, c, d, e\) and the multiplicity parameter \(\mu\), it reduces to numerous well-known transforms. For instance, it encompasses the quadratic-phase Fourier-Bessel transform, the linear canonical Dunkl transform, and the fractional Dunkl transform. Moreover, in the case \(\mu = -1/2\), it recovers the classical quadratic-phase Fourier transform and its special cases such as the linear canonical transform, the fractional Fourier transform, and the standard Fourier transform. Additionally, it includes the Dunkl transform and the Dunkl-type Fresnel transform as particular instances. This versatility demonstrates the transformative power of the quadratic-phase Dunkl transform in bridging various generalizations of the Fourier transform.

In this paper, we develop the comprehensive theory of the quadratic-phase Dunkl transform, establishing its fundamental properties including boundedness, continuity, and a direct expression in terms of the classical Dunkl transform. We also established a Riemann–Lebesgue result, an inversion formula, and Parseval/Plancherel identities. These results confirm that the quadratic-phase Dunkl transform acts as a unitary operator on \(L^2_\mu(\mathbb{R})\).
Furthermore, we introduced the translation and convolution operators linked to this transform and verified their main structural properties together with a Young-type inequality. We then obtained a Heisenberg-type uncertainty principle for the quadratic-phase Dunkl transform, which extends the classical uncertainty principle to this more general framework.

 The outline of this paper is organized as follows. In Section \ref{sec2}, we recall some necessary preliminaries on the Dunkl transform and related special functions. Section \ref{sec3} introduces the quadratic-phase Dunkl transform, examines its fundamental properties, and details its special cases. In Section \ref{sec4}, we define and study the associated quadratic-phase Dunkl translation operator. Section \ref{sec5}, is devoted to the corresponding convolution product and its main properties. Finally, Section \ref{sec6}, establishes a Heisenberg-type uncertainty principle for the quadratic-phase Dunkl transform.

\section{Preliminaries}\label{sec2}

Along this article, we consider the following notation and function spaces.
\begin{itemize}
	
	\item $\mathcal{C}_{0}(\mathbb{R})$ :  represents the space of continuous functions on $ \mathbb{R} $ that vanish at infinity, equipped with the topology of uniform convergence.
	
	\item  $L^{p}_{\mu}(\mathbb{R})$ denotes the Lebesgue space of measurable functions $f$ on $\mathbb{R}$ such that
	$$
	\begin{array}{l}
		\|f\|_{\mu,p}=\left(\int_{\mathbb{R}}|f(v)|^{p} |v|^{2\mu+1} dv\right)^{\frac{1}{p}}<\infty, \quad \text { if } 1 \leq p<\infty \\
		\|f\|_{\mu,\infty}=\displaystyle\supess_{v\in\mathbb{R}}|f(v)|<\infty, \quad \text { if } p=\infty,
	\end{array}
	$$
	provided with the topology defined by the norm $\|\cdot\|_{\mu ,p}$, with $\mu\geq -1/2$.
	\item $\mathcal{S}\left(\mathbb{R}\right)$  denotes the Schwartz space of functions on \( \mathbb{R} \) that decay rapidly.
\end{itemize}

For $\mu \geq  -1/2$, the differential-difference operators $\Lambda_{\mu}$ related to the reflection group $\mathbb{Z}_{2}$ on $\mathbb{R}$ is given by

$$
\Lambda_{\mu} f(v)=\frac{\mathrm{d}}{\mathrm{~d} v} f(v)+\frac{2 \mu+1}{2 v}[f(v)-f(v)],
$$
called the Dunkl operator on $\mathbb{R}$ of index $\mu+1/2$ associated with the reflection group $\mathbb{Z}_{2}$, see [19,20].

For $\mu \geq- 1/2$ and $w \in \mathbb{C}$, the initial value problem

\[
\begin{cases}
	\Lambda_{\mu} f(v)=w f(v), \quad v \in \mathbb{R}, \\ 
	f(0)=1,
\end{cases}
\]
admits a unique solution denoted by $E_{\mu}(w,v)$ and called the  kernel of classical Dunkl operator given by (see \cite{Rosler2003})
\begin{equation}\label{dkernel}
	E_{\mu}(i w, v)=j_{\mu}(wv)+\frac{i wv}{2(\mu+1)} j_{\mu+1}(wv),
\end{equation}
where $j_{\mu}$ represent the normalized spherical Bessel function
\begin{equation}\label{seriesb}
	j_{\mu}(w):=2^{\mu} \Gamma(\mu+1) \frac{J_{\mu}(w)}{w^{\mu}}=\Gamma(\mu+1) \sum_{n=0}^{+\infty} \frac{(-1)^{n}(w / 2)^{2 n}}{n ! \Gamma(n+\mu+1)},
\end{equation}
and the constant $	c_\mu$ is given by
\begin{equation}
	c_\mu= \frac{1}{2^{\mu+1}\Gamma(\mu+1)}.
\end{equation}
Here $J_{\mu}$ denotes the classical Bessel function (see \cite{Watson1944}).

Note that this kernel  has the following important properties.\\
For $w, v \in \mathbb{C}$, we have $E_{\mu}(w,v)=E_{\mu}(v,w) ; E_{\mu}(w,0)=1$ and

\begin{equation*}
	\left|E_{\mu}(i w, v)\right| \leq 1.
\end{equation*}
For $f \in L^{1}_{\mu}(\mathbb{R})$, the Dunkl transform $\mathcal{D}_\mu$ is given by

\begin{equation*}
 \mathcal{D}_\mu[f](w)= c_\mu \int_{\mathbb{R}} E_{\mu}(-i w, v) f(v)|v|^{2 \mu+1} d v.
\end{equation*}

In the following, we state some fundamental properties for the Dunkl transform.
\begin{theorem}
\begin{enumerate}
\item  For all $f \in L^{1}_{\mu}(\mathbb{R})$ such that $\mathcal{D}_\mu f\ \in L^{1}_{\mu}(\mathbb{R})$, the inversion formula for the Dunkl transform is given by

\begin{equation*}
	f(v)= c_\mu \int_{\mathbb{R}} E_{\mu}(i w, v) f(w)|w|^{2 \mu+1} d w.
\end{equation*}

\item  For all $f, g \in L^{2}_{\mu}(\mathbb{R})$,  the  Parseval's formula for  the Dunkl transform is given by

\begin{equation*}
	\int_{\mathbb{R}} f(v) \overline{g(v)} |v|^{2 \mu+1} d v = \int_{\mathbb{R}}\mathcal{D}_\mu [f](w) \overline{\mathcal{D}_\mu [g](w)} |w|^{2 \mu+1} d w.
\end{equation*}

\item  For all $f \in L^{2}_{\mu}(\mathbb{R})$  the Plancherel's formula for  the Dunkl transform is given by

\begin{equation*}
	\int_{\mathbb{R}}|f(v)|^{2} |v|^{2 \mu+1} d v = \int_{\mathbb{R}}\left|\mathcal{D}_\mu [f](w)\right|^{2} |w|^{2 \mu+1} d w.
\end{equation*}
\end{enumerate}
\end{theorem}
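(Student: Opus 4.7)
The plan is to prove the three parts sequentially, exploiting the fact that the one-dimensional Dunkl transform decomposes naturally into a pair of Hankel-type transforms through the even/odd splitting of its kernel. This decomposition reduces everything to classical Bessel/Hankel theory, for which all three identities are already known.

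For the inversion formula, I would write $f = f_e + f_o$ with $f_e(v)=\tfrac{1}{2}(f(v)+f(-v))$ and $f_o(v)=\tfrac{1}{2}(f(v)-f(-v))$. Using the explicit expression $E_\mu(iw,v) = j_\mu(wv) + \tfrac{iwv}{2(\mu+1)} j_{\mu+1}(wv)$ from (\ref{dkernel}), the even part $\mathcal{D}_\mu[f_e](w)$ becomes, up to the normalising constant, a Hankel transform of order $\mu$ applied to $f_e$, while $\mathcal{D}_\mu[f_o](w)$ becomes, after factoring an $iw/(2(\mu+1))$, a Hankel transform of order $\mu+1$ applied to $f_o(v)/v$. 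Each block is inverted by the classical Hankel inversion theorem under the integrability assumption on $\mathcal{D}_\mu f$; reassembling the even and odd inversions and using the parity properties of $j_\mu$ and $j_{\mu+1}$ produces the stated formula for $f$.

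For Parseval's formula, I would first establish it on the Schwartz class $\mathcal{S}(\mathbb{R})$. Given $f,g\in\mathcal{S}(\mathbb{R})$, substitute the inversion formula for $g$ into $\int_\mathbb{R} f(v)\overline{g(v)}|v|^{2\mu+1}\,dv$, interchange the order of integration by Fubini (justified by the rapid decay of Schwartz functions together with the uniform bound $|E_\mu(iw,v)|\le 1$), and use $\overline{E_\mu(iw,v)}=E_\mu(-iw,v)$ for $w,v\in\mathbb{R}$ to collapse the inner integral into $\mathcal{D}_\mu[f](w)$. The identity then extends to arbitrary $f,g\in L^2_\mu(\mathbb{R})$ by the density of $\mathcal{S}(\mathbb{R})$ in $L^2_\mu(\mathbb{R})$ and the continuity of the bilinear pairings on both sides. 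The Plancherel identity follows at once by specialising to $g=f$.

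The main obstacle will be justifying the Hankel inversion step under only the integrability hypothesis in part (1), since Hankel inversion for merely $L^1$ data requires either extra regularity or the extra assumption $\mathcal{D}_\mu f\in L^1_\mu(\mathbb{R})$ that the statement already provides; with this hypothesis the inversion holds pointwise almost everywhere. A second delicate point is the Fubini exchange in the Parseval computation, because $E_\mu$ is only bounded rather than integrable; this is handled cleanly by performing the computation on $\mathcal{S}(\mathbb{R})$ (where absolute convergence is automatic) and then transferring the identity to $L^2_\mu(\mathbb{R})$ via density, after which Plancherel is an immediate corollary.
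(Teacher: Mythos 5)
The paper does not prove this theorem at all: it is stated in Section~2 as a recalled preliminary, a classical fact about the rank-one Dunkl transform imported from the literature (Dunkl, R\"osler), so there is no in-paper argument to compare yours against. Your outline is the standard proof of these facts and is essentially sound: the even/odd splitting of $E_\mu(iw,v)=j_\mu(wv)+\tfrac{iwv}{2(\mu+1)}j_{\mu+1}(wv)$ does reduce the transform to a Hankel transform of order $\mu$ on $f_e$ and a (weighted) Hankel transform of order $\mu+1$ on $f_o(v)/v$, and classical Hankel inversion under the hypothesis $\mathcal{D}_\mu f\in L^1_\mu(\mathbb{R})$ yields part (1); the Schwartz-class computation plus Fubini and $\overline{E_\mu(iw,v)}=E_\mu(-iw,v)$ yields Parseval on $\mathcal{S}(\mathbb{R})$. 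One point to tighten in the write-up: the density extension to $L^2_\mu(\mathbb{R})$ cannot invoke ``continuity of the bilinear pairing on both sides'' as stated, because boundedness of $\mathcal{D}_\mu$ on $L^2_\mu(\mathbb{R})$ is precisely what Plancherel asserts. The correct order is to first specialize Parseval on $\mathcal{S}(\mathbb{R})$ to $g=f$ to get the isometry property on the dense subspace, extend $\mathcal{D}_\mu$ to an isometry of $L^2_\mu(\mathbb{R})$ by density, and then recover the general Parseval identity by polarization. This is a reordering rather than a missing idea; with it, the proposal is a complete and correct plan.
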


\section{Fundamental properties of  quadratic-phase Dunkl  transform}\label{sec3}

We began this section by introducing the definition of the quadratic-phase Dunkl transform and stating its particular cases using five real parameters (a, b, c, d, e) and the multiplicity function \(\mu \geq -1/2\).

Next we developed its fundamental properties, including the reversibility property and Parseval’s formula.

\begin{definition}
We define  the quadratic-phase Dunkl transform of an integrable function $f$ on the real line as 

\begin{equation}\label{QFDT}
	\mathcal{D}^{a,b,c}_{d,e,\mu}[f](w)= \frac{c_\mu}{( i b)^{\mu+1}} \int_{\mathbb{R}} \Psi^{a,b,c}_{d,e,\mu}(w,v) f(v)|v|^{2 \mu+1} d v,
\end{equation}
where $\Psi^{a,b,c}_{d,e,\mu}(w,v)$ denotes the quadratic-phase Dunkl kernel, given by

\begin{equation}\label{QFDTK}
	\Psi^{a,b,c}_{d,e,\mu}(w,v)= e^{-i(av^2+cw^2+dv+ew)} E_{\mu}(-i w / b, v),
\end{equation}
with $E_{\mu}(w,v)$ is the Dunkl kernel  given by the relation (\ref{dkernel}).
\end{definition}
\subsection*{Particular cases}
With suitable choices of parameters  \( a, b, c, d, e \) and multiplicity function $\mu$, the  quadratic-phase Dunkl  transform (\ref{QFDT}) reduces to several well-known integral transforms, as shown below:
\begin{itemize}
	\item The even part of  quadratic-phase Dunkl  transform reduced to the  quadratic-phase Fourier-Bessel  transform defined by  \cite{QFFBT}:
	
	\begin{equation}\label{QF.FBT}
		\mathcal{B}^{a,b,c}_{d,e,\mu}[f](w)= \frac{2c_\mu}{( i b)^{\mu+1}} \int_{0}^\infty \mathcal{J}^{a,b,c}_{d,e,\mu}(w,v) f(v)v^{2 \mu+1} d v , \quad b \neq 0,
	\end{equation}
	where $\mathcal{J}^{a,b,c}_{d,e,\mu}(w,v)$ denotes the quadratic-phase Fourier-Bessel kernel,  given by
	
	\begin{equation}\label{QF.FBTK}
		\mathcal{J}^{a,b,c}_{d,e,\mu}(w,v)= e^{-i(av^2+cw^2+dv+ew)} j_{\mu}\left(wv/b\right),
	\end{equation}
	where $j_{\mu}$ is the normalized spherical Bessel function given by the relation (\ref{seriesb}).
	
	\item If  $a=d=1$, $c=\tau$ and $c=e=0$, then, the quadratic-phase Dunkl is transform reduced to one dimension Dunkl type Fresnel transform defined by R\"{o}sler \cite{Roslerfresnel}
				
	\begin{equation*}
		\mathcal{D}_{\mu}^{\tau}(f)(w)=\left\{\begin{array}{ll}
			\displaystyle\frac{1}{\Gamma(\mu+1)(2 i \tau)^{\mu+1}} \int_{\mathbb{R}} e^{\frac{i}{2 \tau}\left(w^{2}+v^{2}\right)} E_{\mu}(-i w / \tau, v) f(v)|v|^{2 \mu+1} dv, & \tau \neq 0 \\\\
			f(x), & \tau=0.
		\end{array}\right.
	\end{equation*}
	
	\item If $d=e=0$, $b\neq 0$,  and under  the transformations $a \mapsto - a/2b$ and $c \mapsto - c/2b$, we obtain the one dimensional	 linear canonical Dunkl transform \cite{Ghazouani-unified17, saoudi2024hardy}
	
	\begin{equation}\label{LCDT}
		\mathcal{L}^\mathcal{D}_{\mu} [f](w)= \frac{c_\mu}{( i b)^{\mu+1}}\int_{\mathbb{R}} \mathcal{K}_{\mu}(w,v) f(v)|v|^{2 \mu+1} d v,\quad   b \neq 0,
	\end{equation}
	where $\mathcal{K}_{\mu}(w,v)$ represents the linear canonical Dunkl  kernel defined by 
	
	\begin{equation}\label{LCDT.K}
		\mathcal{K}_{\mu}(w,v)=e^{\frac{i}{2}\left(\frac{a}{b} v^{2}+\frac{d}{b} w^{2}\right)} E_{\mu}(i w, v),
	\end{equation}
	and  $E_{\mu}(w,v)$ is the Dunkl kernel  given by the relation (\ref{dkernel}).
	
	\item If $d=e=0$, $b\neq 0$,  and under  the transformations $a \mapsto - a/2b$ and $c \mapsto - c/2b$, then the even part of  quadratic-phase Dunkl  transform reduced to the  linear canonical Fourier-Bessel  transform defined  by  \cite{ dhaouadi2020harmonic, mohamed2024linear}
	
	\begin{equation}\label{LCFBT}
		\mathcal{L}^\mathcal{B}_{\mu} [f](w)= \frac{2c_\mu}{( i b)^{\mu+1}}\int_{\mathbb{R}} \mathscr{K}_{\mu}(w,v) f(v)|v|^{2 \mu+1} d v,\quad   b \neq 0,
	\end{equation}
	where $E_{\mu}^{M}(w,v)$ represents the linear canonical Fourier-Bessel  kernel defined by 
	
	\begin{equation}\label{LCFBT.K}
		\mathscr{K}_{\mu}(w,v)=e^{\frac{i}{2}\left(\frac{a}{b} v^{2}+\frac{d}{b} w^{2}\right)} j_{\mu}\left(wv/b\right).
	\end{equation}

	\item If $d = e = 0$, $a = c = -\tfrac{\cot \theta}{2}$, $b = \sin\theta $ (where $\theta \neq n\pi$, $n \in \mathbb{Z}$), then the   quadratic-phase Dunkl  transform reduced to  the fractional Dunkl transform (except for a constant unimodular factor $\left(e^{i \theta}\right)^{\mu+1}$ defined by Ghazouani et al.  \cite{Ghazouani14, Ghazouani16}:
	\begin{equation*}
		\mathcal{D}_{\mu}^{\theta} f(w)=\left\{\begin{array}{ll}
			\displaystyle A_{\mu,\theta}\int_{\mathbb{R}} e^{\tfrac{i}{2}\left(v^{2}+w^{2}\right) \cot \alpha} E_{\mu}\left(\tfrac{-iw}{\sin(\theta)} , v\right) f(v)|v|^{2 \mu+1} d v, & \pi\in\mathbb{R}-n\pi\mathbb{Z}\\
			f(w), & \theta=2 n \pi \\
			f(-w), & \theta=(2 n+1) \pi,
		\end{array}\right.
	\end{equation*}
	where $A_{\mu,\theta}=\tfrac{ e^{i(\mu+1)(\hat{\theta}\pi/2-(\theta-2n \pi))}}{\Gamma(\mu+1)(2|\sin \theta|)^{\mu+1}}$,  $n\in \mathbb{Z}$ and $\hat{\theta}=sgn(\sin\theta)$.
	
		\item If  $a = c = d = e = 0$ and $b = 1$, then the quadratic-phase Dunkl  transform is to reduced to the Dunkl transform  \cite{dunkl1989differential, dunkl1991integral, dunkl1992Hankel}:
	\begin{equation*}
		\mathcal{D}_{\mu} f(x)=\frac{1}{2^{\mu+1} \Gamma(\mu+1)} \int_{\mathbb{R}} f(x) E_{\mu}(-ix,y)|y|^{2 \mu+1} d y.
	\end{equation*}
	
	\item If $\mu=-1/2$, then under  the transformation $b \mapsto 1/b$ the quadratic-phase Dunkl  transform reduced to the  quadratic-phase Fourier transform defined by \cite{castro2018new} 
	
	\begin{equation}\label{QFF.T}
		\mathcal{Q}^{a,b,c}_{d,e}[f](w)= \frac{1}{ \sqrt{2\pi}} \int_{\mathbb{R}} \mathcal{K}^{a,b,c}_{d,e}(w,v) f(v)d v,
	\end{equation}
	where $\mathcal{K}^{a,b,c}_{d,e}(w,v)$ denotes the quadratic-phase Fourier kernel, given by (\ref{kernelQPFT}).
%
%
		\item  If  $\mu=-\tfrac{1}{2}$, $d=e=0$, $b\neq 0$, then, under  the transformations $a \mapsto - a/2b$ and $c \mapsto - c/2b$,  the quadratic-phase Dunkl (amplifying by  $\tfrac{1}{\sqrt{ib}}$) is reduced to the linear canonical transform  \cite{recentdevLCT2007}
	
	\begin{equation}
		\mathcal{L}[f](w) = \frac{1}{\sqrt{2\pi i b}} \int_{\mathbb{R}} f(v) \exp\left\{\frac{i(av^2 - 2ww +  cw^2)}{2b}\right\}  dv.
	\end{equation}

\item 	If $\mu=-\tfrac{1}{2}$,  $d = e = 0$, $a = c = -\tfrac{\cot \theta}{2}$, $b = \csc \theta$ (where $\theta \neq n\pi$, $n \in \mathbb{Z}$), then  the quadratic-phase Fourier transform (amplifying by  $\sqrt{1-i \cot \theta}$) is reduced to  the fractional Fourier transform, 

\begin{equation*}
	\mathscr{F}^\theta[f](\omega)=\int_{\mathbb{R}} f(v) \mathcal{K}_\theta(w, v) d v,
\end{equation*}
where the kernel  $\mathcal{K}_\theta(w, v)$   is given by (\ref{defFrFT}).

	\item If $\mu=-\tfrac{1}{2}$,    $a = c = d = e = 0$ and $b = 1$, then  the quadratic-phase Fourier transform is reduced to the classical Fourier transform given by
	$$
	\mathcal{F}(f)(\xi)=\frac{1}{\sqrt{2 \pi}} \int_{-\infty}^{+\infty} f(x) e^{-i x \xi} d x.
	$$
\end{itemize}

\begin{lemma} Let  \( a, b, c, d, e \)  five real parameters such that $b\neq 0$ and $\mu\geq -1/2$.
 Then for all  $v,w\in \mathbb{R}$, we have:
 \begin{enumerate}
 	\item The  quadratic-phase Dunkl kernel is bounded and we have
		\begin{equation}\label{ModuleQFDTK}
			\left|\Psi^{a,b,c}_{d,e,\mu}(w,v)\right|\leq 1.
		\end{equation}	
	\item 	$\overline{\Psi^{a,b,c}_{d,e,\mu}(w,v)}=\Psi^{-c,-b,-a}_{-e,-d,\mu}(w,v)$.  
	\end{enumerate}
\end{lemma}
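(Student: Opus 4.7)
The plan is to decompose $\Psi^{a,b,c}_{d,e,\mu}(w,v)$ into the product of the pure phase factor $e^{-i(av^2+cw^2+dv+ew)}$ and the Dunkl factor $E_\mu(-iw/b,v)$, and then separately control each factor. The only external facts I need are (i) the pointwise bound $|E_\mu(i\xi,\eta)|\le 1$ for real $\xi,\eta$ recalled in the preliminaries, and (ii) the series expression $E_\mu(iw,v)=j_\mu(wv)+\frac{iwv}{2(\mu+1)}j_{\mu+1}(wv)$ together with the fact that the normalized spherical Bessel functions $j_\mu,j_{\mu+1}$ take real values on the real line.

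For part (1), I would observe that for real parameters $a,b,c,d,e$ and real $v,w$ the quantity $av^2+cw^2+dv+ew$ is real, so the phase factor has modulus exactly $1$. Writing $E_\mu(-iw/b,v)=E_\mu(i(-w/b),v)$ with $-w/b$ real, the bound from the preliminaries gives $|E_\mu(-iw/b,v)|\le 1$. Multiplying these two estimates yields $|\Psi^{a,b,c}_{d,e,\mu}(w,v)|\le 1$.

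For part (2), conjugation of the phase factor is immediate: $\overline{e^{-i(av^2+cw^2+dv+ew)}}=e^{i(av^2+cw^2+dv+ew)}$. For the Dunkl factor I would use the series expansion to verify that $\overline{E_\mu(i\xi,\eta)}=E_\mu(-i\xi,\eta)$ when $\xi,\eta\in\mathbb{R}$, since $j_\mu(\xi\eta)$ is real and only the coefficient $\frac{i\xi\eta}{2(\mu+1)}$ changes sign under conjugation; hence $\overline{E_\mu(-iw/b,v)}=E_\mu(iw/b,v)$. It then remains to recognize the resulting expression as $\Psi^{-c,-b,-a}_{-e,-d,\mu}(w,v)$ by substituting the indicated parameter replacements back into the defining formula~(\ref{QFDTK}), observing that the signs flip correctly in both the phase exponent and the argument $-iw/b$ of the Dunkl kernel (since $b$ is replaced by $-b$). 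A small additional input is the symmetry of $E_\mu$ inherited from the fact that $j_\mu$ and $j_{\mu+1}$ depend on $\xi,\eta$ only through the product $\xi\eta$, which aligns the two expressions after the parameter relabeling.

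The main obstacle, such as it is, is purely bookkeeping: tracking the five parameter substitutions $(a,b,c,d,e)\mapsto(-c,-b,-a,-e,-d)$ carefully through the phase exponent and the Dunkl kernel argument, and invoking the symmetry of $E_\mu$ at the right moment to match the two sides. No estimates, limits, or deeper properties of the Dunkl setting are required.
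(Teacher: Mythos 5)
Part (1) of your argument is correct and coincides with the paper's proof: the phase factor has unit modulus because $av^2+cw^2+dv+ew$ is real, and the bound $\left|E_{\mu}(-iw/b,v)\right|\le 1$ (valid since $-w/b\in\mathbb{R}$) does the rest.

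Part (2) contains a genuine error in the bookkeeping step you yourself identified as the only obstacle. Conjugation gives $\overline{\Psi^{a,b,c}_{d,e,\mu}(w,v)}=e^{i(av^2+cw^2+dv+ew)}E_{\mu}(iw/b,v)$, as you say. But under the substitution $(a,b,c,d,e)\mapsto(-c,-b,-a,-e,-d)$ with the arguments kept in the order $(w,v)$, the defining formula \eqref{QFDTK} yields $\Psi^{-c,-b,-a}_{-e,-d,\mu}(w,v)=e^{i(cv^2+aw^2+ev+dw)}E_{\mu}(iw/b,v)$: because $a$ always multiplies the \emph{second} argument squared and $c$ the \emph{first}, swapping $a\leftrightarrow c$ and $d\leftrightarrow e$ (with sign changes) does \emph{not} reproduce the exponent $av^2+cw^2+dv+ew$ unless $a=c$ and $d=e$. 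So the signs do not ``flip correctly'' as you claim, and the identity as written in the lemma is false for generic parameters. The symmetry of $E_{\mu}$, which you invoke to ``align the two expressions,'' repairs only the Dunkl factor; the phase factor is not symmetric in $(w,v)$. The correct statement is $\overline{\Psi^{a,b,c}_{d,e,\mu}(w,v)}=\Psi^{-c,-b,-a}_{-e,-d,\mu}(v,w)$, with the arguments interchanged on the right (this is precisely the form used later in the proof of the reversibility theorem, where the product $\Psi^{-c,-b,-a}_{-e,-d,\mu}(v,w)\,\Psi^{a,b,c}_{d,e,\mu}(w,u)$ appears), or equivalently $\Psi^{-a,-b,-c}_{-d,-e,\mu}(w,v)$ if one keeps the argument order. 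The paper's own ``proof'' of this item is the single sentence ``The proof is trivial,'' so it offers no check; but a careful write-up should either swap the arguments or renegotiate the parameter substitution, rather than assert that the stated identity follows from sign-flipping.
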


\begin{proof} 
For \( a, b, c, d, e \in \mathbb{R}\) such that $b\neq 0$ and $\mu\geq -1/2$.
 \begin{enumerate}
	\item We have 
\begin{eqnarray*}
  \left|\Psi^{a,b,c}_{d,e,\mu}(w,v)\right|&=& \left| e^{-i(av^2+cw^2+dv+ew)} E_{\mu}(-i w / b, v) \right|\\
  &\leq&  \left| E_{\mu}(-i w / b, v) \right|.
\end{eqnarray*}
According to \cite[Corollary 5.4]{rosler1999positivity}, we deduce that

		\begin{equation*}\
	\left|\Psi^{a,b,c}_{d,e,\mu}(w,v)\right|\leq 1.
\end{equation*}
\item The proof is trivial.
\end{enumerate}
\end{proof}

\begin{theorem}\label{cont1QPDT}
Let \( a, b, c, d, e \in \mathbb{R}\) such that $b\neq 0$,  $\mu \geq  -1/2$. 
\begin{enumerate}
 \item If  $f$ be a function belongs to  $L^{1}_{\mu}(\mathbb{R})$. Then the maps $w\mapsto\mathcal{D}^{a,b,c}_{d,e,\mu}[f](w)$ is continuous on the real line and we have
 
 \begin{equation}
 	\left|\mathcal{D}^{a,b,c}_{d,e,\mu}[f](w)\right|\leq \frac{c_\mu}{ |i b|^{\mu+1}}	\|f\|_{\mu,1}.
 \end{equation}
  \item The quadratic-phase Dunkl transform admits a fundamental representation in terms of the classical Dunkl transform. For any function  $f\in \mathcal{S}\left(\mathbb{R}\right)$, the quadratic-phase Dunkl transform can be expressed as
 
\begin{equation}\label{relDTvsQPDT}
\mathcal{D}^{a,b,c}_{d,e,\mu}[f](w) = \frac{1}{( i b)^{\mu+1}} e^{-i(cw^2+ew)} \mathcal{D}_\mu(h)(w / b),
\end{equation}
 with $h(v)=e^{-i(av^2+dv)}f(v).$ 
\end{enumerate}
\end{theorem}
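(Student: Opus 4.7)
\smallskip
\noindent\textbf{Proof plan for Theorem \ref{cont1QPDT}.}

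For part (1), the plan is to exploit the kernel bound \eqref{ModuleQFDTK}. First I would take absolute values inside the integral in \eqref{QFDT} and pull the constant $c_\mu/|ib|^{\mu+1}$ outside; since $|\Psi^{a,b,c}_{d,e,\mu}(w,v)|\le 1$, the integrand is dominated pointwise by $|f(v)|\,|v|^{2\mu+1}$, which is integrable by assumption. This yields the stated $L^1$--$L^\infty$ bound $\bigl|\mathcal{D}^{a,b,c}_{d,e,\mu}[f](w)\bigr|\le \tfrac{c_\mu}{|ib|^{\mu+1}}\|f\|_{\mu,1}$. For continuity, I would fix $w_0\in\mathbb{R}$ and a sequence $w_n\to w_0$, and apply the dominated convergence theorem: the map $w\mapsto\Psi^{a,b,c}_{d,e,\mu}(w,v)$ is continuous for each $v\in\mathbb{R}$ (as the product of the continuous quadratic-phase factor $e^{-i(av^2+cw^2+dv+ew)}$ with the entire Dunkl kernel $E_\mu(-iw/b,v)$), while the same universal bound $|\Psi^{a,b,c}_{d,e,\mu}(w_n,v)|\le 1$ provides the integrable majorant $|f(v)|\,|v|^{2\mu+1}$. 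Passing the limit inside the integral gives $\mathcal{D}^{a,b,c}_{d,e,\mu}[f](w_n)\to\mathcal{D}^{a,b,c}_{d,e,\mu}[f](w_0)$.

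For part (2), the plan is purely algebraic: substitute the explicit expression \eqref{QFDTK} of the kernel into \eqref{QFDT} and separate the variables. The factor $e^{-i(cw^2+ew)}$ depends only on $w$, so it can be pulled outside the integral. What remains inside is
\begin{equation*}
\int_{\mathbb{R}} e^{-i(av^2+dv)}f(v)\,E_\mu(-iw/b,\,v)\,|v|^{2\mu+1}\,dv
=\int_{\mathbb{R}} h(v)\,E_\mu(-i(w/b),\,v)\,|v|^{2\mu+1}\,dv,
\end{equation*}
with $h(v)=e^{-i(av^2+dv)}f(v)$. By the very definition of the Dunkl transform recalled in Section \ref{sec2}, this last integral equals $c_\mu^{-1}\mathcal{D}_\mu(h)(w/b)$. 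Reinstating the prefactor $c_\mu/(ib)^{\mu+1}$ and the phase $e^{-i(cw^2+ew)}$ yields exactly the identity \eqref{relDTvsQPDT}. Since $f\in\mathcal{S}(\mathbb{R})$ implies $h\in L^1_\mu(\mathbb{R})$ (in fact $h\in\mathcal{S}(\mathbb{R})$ because the Schwartz class is stable under multiplication by smooth functions of polynomial modulus-one symbols), all manipulations are justified by Fubini/absolute convergence.

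Neither step presents a serious obstacle: the only subtlety is ensuring that the splitting $\Psi^{a,b,c}_{d,e,\mu}(w,v)=e^{-i(cw^2+ew)}\bigl(e^{-i(av^2+dv)}E_\mu(-iw/b,v)\bigr)$ cleanly separates a $w$-only factor from an integrand that is (up to the change $w\mapsto w/b$) a Dunkl transform. The continuity part is standard once the uniform bound \eqref{ModuleQFDTK} is available, so I would expect the write-up to be short, with the bulk of the work being the verification of the representation formula \eqref{relDTvsQPDT}.
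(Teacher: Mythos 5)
Your proposal is correct and follows essentially the same route as the paper's proof: the bound and continuity in part (1) come from the kernel estimate \eqref{ModuleQFDTK} together with dominated convergence (which you spell out more explicitly than the paper does), and part (2) is the same direct factorization of the kernel that isolates the $w$-only phase $e^{-i(cw^2+ew)}$ and identifies the remaining integral as $\mathcal{D}_\mu(h)(w/b)$. No gaps to report.
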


\begin{proof} We consider  \( a, b, c, d, e \in \mathbb{R}\) such that $b\neq 0$,  $\mu\geq -1/2$.
\begin{enumerate}
	\item Let $f$ be function in  $L^{1}_{\mu}(\mathbb{R})$. Since the maps $w\mapsto 	E_{\mu}(i w, v)$ is continuous on $\mathbb{R}$, we deduce that
	 $$w\mapsto\mathcal{D}^{a,b,c}_{d,e,\mu}[f](w)$$ 
	  is also continuous maps on the real line for every \( x \in \mathbb{R} \). Furthermore,
	\begin{eqnarray*}
         	\left|\mathcal{D}^{a,b,c}_{d,e,\mu}[f](w)\right| &=&\left| \frac{c_\mu}{( i b)^{\mu+1}} \int_{\mathbb{R}} \Psi^{a,b,c}_{d,e,\mu}(w,v) f(v)|v|^{2 \mu+1} d v \right| \\
         	&\leq & \frac{c_\mu}{ |i b|^{\mu+1}} \int_{\mathbb{R}} \left|\Psi^{a,b,c}_{d,e,\mu}(w,v) f(v)\right| |v|^{2 \mu+1} d v  \\
         		&\leq& \frac{c_\mu}{ |i b|^{\mu+1}} \int_{\mathbb{R}} \left| f(v)\right| |v|^{2 \mu+1} d v  \\
         		& \leq& \frac{c_\mu}{ |i b|^{\mu+1}}	\|f\|_{\mu,1}.
	\end{eqnarray*}
	
	\item Let   $f$ be a function in $ \mathcal{S}\left(\mathbb{R}\right)$, then we have
	\begin{eqnarray*}
		\mathcal{D}^{a,b,c}_{d,e,\mu}[f](w)&=& \frac{c_\mu}{( i b)^{\mu+1}} \int_{\mathbb{R}} \Psi^{a,b,c}_{d,e,\mu}(w,v) f(v)|v|^{2 \mu+1} d v
		\\
		 &=& \frac{c_\mu}{( i b)^{\mu+1}} \int_{\mathbb{R}} e^{-i(av^2+cw^2+dv+ew)} E_{\mu}(-i w / b, v) f(v)|v|^{2 \mu+1} d v \\
		&=& \frac{1}{( i b)^{\mu+1}} e^{-i(cw^2+ew)} \left[ c_\mu \int_{\mathbb{R}} E_{\mu}(-i w / b, v) e^{-i(av^2+dv)} f(v) |v|^{2 \mu+1} d v \right] \\
		&=& \frac{1}{( i b)^{\mu+1}} e^{-i(cw^2+ew)} \mathcal{D}_\mu(h)(w / b),
	\end{eqnarray*}
	 with $h(v)=e^{-i(av^2+dv)}f(v).$ 
\end{enumerate}  
\end{proof}

\begin{proposition}{(Riemann–Lebesgue lemma for Quadratic-Phase Dunkl Transform)}
	Let \( a, b, c, d, e \) be five real parameters such that $b\neq 0$ and $\mu \geq -1/2$. For every $f\in L^{1}_{\mu}(\mathbb{R})$, the quadratic-phase Dunkl transform $\mathcal{D}^{a,b,c}_{d,e,\mu}[f]$ belongs to \( \mathcal{C}_{0}(\mathbb{R}) \) and satisfies
	\begin{equation}\label{RLLQFDT}
		\left\|\mathcal{D}^{a,b,c}_{d,e,\mu}[f]\right\|_{\infty}\leq \frac{c_\mu}{ |b|^{\mu+1}}	\|f\|_{\mu,1}.
	\end{equation}
\end{proposition}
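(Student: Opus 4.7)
The norm estimate is essentially a free by-product of Theorem \ref{cont1QPDT}(1), which already establishes $|\mathcal{D}^{a,b,c}_{d,e,\mu}[f](w)| \leq c_\mu |b|^{-(\mu+1)} \|f\|_{\mu,1}$ pointwise (since $|ib|=|b|$), so taking the essential supremum in $w$ yields the desired inequality \eqref{RLLQFDT}. The remaining content of the statement is that $\mathcal{D}^{a,b,c}_{d,e,\mu}[f]$ lies in $\mathcal{C}_0(\mathbb{R})$, which is the genuine part to be proved.

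The plan is to exploit the fundamental representation formula \eqref{relDTvsQPDT}. First I would observe that although \eqref{relDTvsQPDT} is stated in Theorem \ref{cont1QPDT}(2) for Schwartz functions, its derivation is a purely algebraic factorization of the kernel $\Psi^{a,b,c}_{d,e,\mu}$, in which nothing beyond absolute integrability is used. Consequently, for every $f \in L^1_\mu(\mathbb{R})$ the auxiliary function $h(v) = e^{-i(av^2+dv)}f(v)$ satisfies $|h(v)| = |f(v)|$, hence $h \in L^1_\mu(\mathbb{R})$ with $\|h\|_{\mu,1} = \|f\|_{\mu,1}$, and the identity
\begin{equation*}
\mathcal{D}^{a,b,c}_{d,e,\mu}[f](w) = \frac{1}{(ib)^{\mu+1}} e^{-i(cw^2+ew)} \mathcal{D}_\mu(h)(w/b)
\end{equation*}
remains valid on all of $\mathbb{R}$.

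Next, I would invoke the classical Riemann–Lebesgue lemma for the Dunkl transform, which guarantees that $\mathcal{D}_\mu(h) \in \mathcal{C}_0(\mathbb{R})$ whenever $h \in L^1_\mu(\mathbb{R})$ (this is a standard result in the Dunkl setting and follows in any case from Theorem \ref{cont1QPDT} applied with $a=c=d=e=0$, $b=1$). Because $b \neq 0$, the dilation $w \mapsto w/b$ is a homeomorphism of $\mathbb{R}$ sending neighborhoods of $\pm\infty$ to neighborhoods of $\pm\infty$, so $w \mapsto \mathcal{D}_\mu(h)(w/b)$ still belongs to $\mathcal{C}_0(\mathbb{R})$. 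Finally, the chirp factor $e^{-i(cw^2+ew)}$ is continuous with modulus one; multiplication by such a unimodular continuous function and by the constant $(ib)^{-(\mu+1)}$ preserves continuity as well as the vanishing-at-infinity property (since $|e^{-i(cw^2+ew)}\mathcal{D}_\mu(h)(w/b)| = |\mathcal{D}_\mu(h)(w/b)| \to 0$ as $|w|\to\infty$). This yields $\mathcal{D}^{a,b,c}_{d,e,\mu}[f] \in \mathcal{C}_0(\mathbb{R})$.

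The only delicate point is justifying \eqref{relDTvsQPDT} outside the Schwartz class; this is mild since the factorization $\Psi^{a,b,c}_{d,e,\mu}(w,v) = e^{-i(cw^2+ew)} E_\mu(-iw/b,v) e^{-i(av^2+dv)}$ holds pointwise and Fubini/absolute convergence are guaranteed by $h\in L^1_\mu(\mathbb{R})$ together with $|E_\mu(-iw/b,v)| \leq 1$. Once this is in place, the rest of the argument is a straightforward transfer of the classical Dunkl Riemann–Lebesgue lemma through a unimodular multiplier and an invertible dilation, so I expect no serious obstacle beyond citing or briefly recalling the Riemann–Lebesgue property of $\mathcal{D}_\mu$.
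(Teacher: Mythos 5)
Your proof is correct, but it takes a different route from the paper's at the key step of establishing the decay at infinity. The paper only uses the representation \eqref{relDTvsQPDT} for $f\in\mathcal{S}(\mathbb{R})$ (where $h\in\mathcal{S}(\mathbb{R})$ and the Riemann--Lebesgue property of $\mathcal{D}_\mu(h)$ is essentially immediate), and then passes to general $f\in L^1_\mu(\mathbb{R})$ by an explicit density argument: approximate $f$ by Schwartz functions in $\|\cdot\|_{\mu,1}$, use the uniform estimate \eqref{RLLQFDT} to get uniform convergence of the transforms, and conclude since $\mathcal{C}_0(\mathbb{R})$ is closed under uniform limits. You instead observe, correctly, that \eqref{relDTvsQPDT} is a pointwise factorization of the kernel valid for every $f\in L^1_\mu(\mathbb{R})$ (no Fubini is even needed --- the factor $e^{-i(cw^2+ew)}(ib)^{-(\mu+1)}$ is constant in $v$, and the remaining integral converges absolutely since $|E_\mu(-iw/b,v)|\le 1$ and $|h|=|f|$), and you then invoke the Riemann--Lebesgue lemma for $\mathcal{D}_\mu$ on all of $L^1_\mu(\mathbb{R})$. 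This trades the paper's explicit approximation argument for a stronger external input: the $L^1_\mu$ version of the Dunkl Riemann--Lebesgue lemma, which is indeed standard (de Jeu, R\"osler) but is itself usually proved by exactly the density argument the paper writes out. One genuine slip: your parenthetical claim that this $L^1_\mu$ Riemann--Lebesgue lemma ``follows from Theorem \ref{cont1QPDT} applied with $a=c=d=e=0$, $b=1$'' is not right --- that theorem only yields continuity and boundedness of the transform, not vanishing at infinity, so you must cite the classical result rather than derive it from the paper's Theorem \ref{cont1QPDT}. With that citation in place, your argument is complete, and as a bonus it establishes \eqref{relDTvsQPDT} on all of $L^1_\mu(\mathbb{R})$, which is a useful strengthening of the paper's statement.
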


\begin{proof}
	Let \( f \in L^{1}_{\mu}(\mathbb{R}) \). From the first assertion of Theorem \ref{cont1QPDT}, we know that the transform $\mathcal{D}^{a,b,c}_{d,e,\mu}[f]$ is continuous on $\mathbb{R}$ and bounded by $\frac{c_\mu}{ |b|^{\mu+1}}	\|f\|_{\mu,1}$.
	
	To show that it vanishes at infinity, we first consider the case when \( f \in \mathcal{S}(\mathbb{R}) \). Using the relation (\ref{relDTvsQPDT}) between the Dunkl transform and the quadratic phase Dunkl transform established in Theorem \ref{cont1QPDT}:
	
	\[
	\mathcal{D}^{a,b,c}_{d,e,\mu}[f](w) = \frac{1}{( i b)^{\mu+1}} e^{-i(cw^2+ew)} \mathcal{D}_\mu(h)(w / b),
	\]
	where \( h(v)=e^{-i(av^2+dv)}f(v) \). Since \( f \in \mathcal{S}(\mathbb{R}) \), it follows that \( h \in \mathcal{S}(\mathbb{R}) \). By the classical Riemann-Lebesgue lemma for the Dunkl transform, we have that \( \mathcal{D}_\mu(h) \in \mathcal{C}_{0}(\mathbb{R}) \). The composition \( \mathcal{D}_\mu(h)(-i w / b) \) and multiplication by the continuous bounded function \( e^{-i(cw^2+ew)} \) preserve the property of vanishing at infinity. Hence, \( \mathcal{D}^{a,b,c}_{d,e,\mu}[f] \in \mathcal{C}_{0}(\mathbb{R}) \) for \( f \in \mathcal{S}(\mathbb{R}) \).
	
	Now, for a general \( f \in L^{1}_{\mu}(\mathbb{R}) \), let \( \{f_n\} \subset \mathcal{S}(\mathbb{R}) \) be a sequence such that \( \|f_n - f\|_{\mu,1} \to 0 \) as \( n \to \infty \). From the uniform estimate \eqref{RLLQFDT}, we have:
	
	\[
	\left\| \mathcal{D}^{a,b,c}_{d,e,\mu}[f_n] - \mathcal{D}^{a,b,c}_{d,e,\mu}[f] \right\|_{\infty} \leq \frac{c_\mu}{ |b|^{\mu+1}} \|f_n - f\|_{\mu,1} \to 0.
	\]
	Since the uniform limit of functions in \( \mathcal{C}_{0}(\mathbb{R}) \) also belongs to \( \mathcal{C}_{0}(\mathbb{R}) \), we conclude that \( \mathcal{D}^{a,b,c}_{d,e,\mu}[f] \in \mathcal{C}_{0}(\mathbb{R}) \), completing the proof.
\end{proof}

\begin{proposition} 
	Let \( a, b, c, d, e \in \mathbb{R}\) such that $b\neq 0$, $\mu \geq -1/2$, $\alpha,\beta\in\mathbb{R}$ and $k\in\mathbb{R}_+$. 
	For every $f,g\in L^{1}_{\mu}(\mathbb{R})$, the quadratic-phase Dunkl transform satisfies the following properties:
	\begin{enumerate}
		\item \textbf{Linearity}: $\mathcal{D}^{a,b,c}_{d,e,\mu}[\alpha f+\beta g](w) = \alpha\mathcal{D}^{a,b,c}_{d,e,\mu}[f](w) + \beta\mathcal{D}^{a,b,c}_{d,e,\mu}[g](w)$.
		
		\item \textbf{Scaling}: $ \mathcal{D}^{a,b,c}_{d,e,\mu}[f](k w)=  \frac{1}{k^{2\mu+2}} \mathcal{D}^{a',b,c'}_{d',e',\mu} [f_k](w) $, 
		with $ a' = \frac{a}{k^2}$, $c' = c k^2$, $d' = \frac{d}{k}$, $e' = e k$ and $ f_k(v) = f(v/k)$.
	\end{enumerate}
\end{proposition}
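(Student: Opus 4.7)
The plan is to handle the two parts separately. Linearity is essentially a free consequence of the definition, so the substantive work lies in the scaling identity, which I would reduce to a change of variables combined with a multiplicative homogeneity of the Dunkl kernel.

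For linearity, the transform $\mathcal{D}^{a,b,c}_{d,e,\mu}$ integrates $f$ against the kernel $\Psi^{a,b,c}_{d,e,\mu}(w,v)|v|^{2\mu+1}$ with a normalizing constant $c_\mu/(ib)^{\mu+1}$ that does not depend on $f$. Since $\alpha f + \beta g \in L^1_\mu(\mathbb{R})$ whenever $f,g \in L^1_\mu(\mathbb{R})$, and the bound (\ref{ModuleQFDTK}) makes the integrand integrable, the linearity of the Lebesgue integral gives the claim with nothing more to check.

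For the scaling property, I would start from the definition of $\mathcal{D}^{a,b,c}_{d,e,\mu}[f](kw)$ and perform the substitution $v = u/k$ (legitimate since $k>0$). The Jacobian $du/k$ combines with $|v|^{2\mu+1} = |u|^{2\mu+1}/k^{2\mu+1}$ to produce the overall prefactor $k^{-(2\mu+2)}$. Evaluating the quadratic-phase exponential at the rescaled variables converts $av^2 + cw^2 + dv + ew$ (with $w$ replaced by $kw$) into $(a/k^2)u^2 + ck^2 w^2 + (d/k)u + ekw$, which matches exactly the parameters $a' = a/k^2$, $c' = ck^2$, $d' = d/k$, $e' = ek$ appearing in the statement. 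The factor $f(v) = f(u/k) = f_k(u)$ handles the function side.

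The one step that I expect to require a genuine argument, rather than bookkeeping, is the behaviour of the Dunkl kernel factor $E_\mu(-ikw/b, u/k)$. From the series representation (\ref{dkernel})--(\ref{seriesb}) one sees that $E_\mu$ depends on its two scalar arguments only through their product, so that $E_\mu(\lambda z, v) = E_\mu(z, \lambda v)$ for any scalar $\lambda$; applied with $\lambda = 1/k$ one obtains $E_\mu(-ikw/b, u/k) = E_\mu(-iw/b, u)$, which is precisely the Dunkl kernel in $\mathcal{D}^{a',b,c'}_{d',e',\mu}$ at the point $w$. Recollecting all factors yields the stated identity; no delicate convergence issues arise because $f_k \in L^1_\mu(\mathbb{R})$ whenever $f$ is, and the bound from Theorem \ref{cont1QPDT} applies verbatim to both sides.
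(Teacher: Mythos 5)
Your proposal is correct and follows essentially the same route as the paper: linearity from the linearity of the integral, and the scaling identity via the substitution $v=u/k$ together with the observation that the Dunkl kernel $E_\mu(iw,v)=j_\mu(wv)+\tfrac{iwv}{2(\mu+1)}j_{\mu+1}(wv)$ depends only on the product of its arguments, so $E_\mu(-ikw/b,u/k)=E_\mu(-iw/b,u)$. The paper verifies this last point by writing out the kernel explicitly, which is exactly the computation your homogeneity remark encapsulates.
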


\begin{proof} 
	Let $f,g\in L^{1}_{\mu}(\mathbb{R})$, \( a, b, c, d, e \in \mathbb{R}\) such that $b\neq 0$, $\mu \geq -1/2$, $\alpha,\beta\in\mathbb{R}$ and $k\in\mathbb{R}_+$.
	
	\begin{enumerate}
		\item \textbf{Linearity:} 
		
		\begin{eqnarray*}
			\mathcal{D}^{a,b,c}_{d,e,\mu}[\alpha f+\beta g](w) 
			&=& \frac{c_\mu}{( i b)^{\mu+1}} \int_{\mathbb{R}} \Psi^{a,b,c}_{d,e,\mu}(w,v) [\alpha f+\beta g](v)|v|^{2\mu+1} dv \\
			&=& \frac{\alpha c_\mu}{( i b)^{\mu+1}} \int_{\mathbb{R}} \Psi^{a,b,c}_{d,e,\mu}(w,v) f(v)|v|^{2\mu+1} dv \\
			&\quad& + \frac{\beta c_\mu}{( i b)^{\mu+1}} \int_{\mathbb{R}} \Psi^{a,b,c}_{d,e,\mu}(w,v) g(v)|v|^{2\mu+1} dv \\
			&=& \alpha\mathcal{D}^{a,b,c}_{d,e,\mu}[f](w) + \beta\mathcal{D}^{a,b,c}_{d,e,\mu}[g](w).
		\end{eqnarray*}
		
		\item \textbf{Scaling:} We have
		
		\begin{eqnarray*}
			\mathcal{D}^{a,b,c}_{d,e,\mu}[f](k w) 
			&=& \frac{c_\mu}{( i b)^{\mu+1}} \int_{\mathbb{R}} \Psi^{a,b,c}_{d,e,\mu}(k w,v) f(v)|v|^{2\mu+1} dv \\
			&=& \frac{c_\mu}{( i b)^{\mu+1}} \int_{\mathbb{R}} e^{-i(av^2+c(k w)^2+dv+e(k w))} E_{\mu}(-i (k w) / b, v) f(v)|v|^{2\mu+1} dv.
		\end{eqnarray*}
		
		We make the change of variable $v = u/k$, which gives:
		\begin{itemize}
			\item $dv = du/k$
			\item $|v|^{2\mu+1} = |u/k|^{2\mu+1} = |u|^{2\mu+1}/k^{2\mu+1}$
			\item $av^2 = a(u/k)^2 = (a/k^2)u^2$
			\item $dv = d(u/k) = (d/k)u$
			\item $f(v) = f(u/k) = f_k(u)$
		\end{itemize}
		
		For the Dunkl kernel, we have:
		
	\begin{eqnarray*}
		E_{\mu}(-i (k w) / b, u/k) &=& j_{\mu}\left(\frac{(k w)(u/k)}{b}\right) + \frac{-i (k w)(u/k)}{2(\mu+1)b} j_{\mu+1}\left(\frac{(k w)(u/k)}{b}\right)
		\\
		&=& j_{\mu}\left(\frac{w u}{b}\right) + \frac{-i w u}{2(\mu+1)b} j_{\mu+1}\left(\frac{w u}{b}\right) = E_{\mu}(-i w / b, u)
	\end{eqnarray*}
		
		Therefore,
		\begin{eqnarray*}
			\mathcal{D}^{a,b,c}_{d,e,\mu}[f](k w) 
			&=& \frac{c_\mu}{( i b)^{\mu+1}} \int_{\mathbb{R}} e^{-i\left(\frac{a}{k^2}u^2 + c k^2 w^2 + \frac{d}{k}u + e k w\right)} E_{\mu}(-i w / b, u) f_k(u) \frac{|u|^{2\mu+1}}{k^{2\mu+1}} \frac{du}{k} \\
			&=& \frac{1}{k^{2\mu+2}} \frac{c_\mu}{( i b)^{\mu+1}} \int_{\mathbb{R}} e^{-i\left(a'u^2 + c' w^2 + d'u + e' w\right)} E_{\mu}(-i w / b, u) f_k(u)|u|^{2\mu+1} du \\
			&=& \frac{1}{k^{2\mu+2}} \mathcal{D}^{a',b,c'}_{d',e',\mu} [f_k](w),
		\end{eqnarray*}
		with $a' = a/k^2$, $c' = c k^2$, $d' = d/k$, $e' = e k$, and $f_k(v) = f(v/k)$.
	\end{enumerate}
\end{proof}

\begin{theorem}{(The reversibility property)}\label{invthmQF.DT}  
	Let \( a, b, c, d, e \in \mathbb{R}\) such that $b\neq 0$, and $\mu \geq -1/2$. Let $f$ be a function belonging to $L^{1}_{\mu}(\mathbb{R})$, and $\mathcal{D}^{a,b,c}_{d,e,\mu}[f]$ belonging to $L^{1}_{\mu}(\mathbb{R})$. Then the inverse of the quadratic-phase Dunkl transform is given by
	\begin{eqnarray}\label{InvQF.DT} 
		f(v) &=& \left( \mathcal{D}^{-c,-b,-a}_{-e,-d,\mu} \left( \mathcal{D}^{a,b,c}_{d,e,\mu}[f] \right)(w) \right)(v) \nonumber
		\\
		&=& \frac{c_\mu}{(-i b)^{\mu+1}} \int_{\mathbb{R}} \Psi^{-c,-b,-a}_{-e,-d,\mu}(v,w) \mathcal{D}^{a,b,c}_{d,e,\mu}[f](w) |w|^{2\mu+1} dw.
	\end{eqnarray}
\end{theorem}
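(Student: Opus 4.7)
The plan is to reduce the inversion problem for the quadratic-phase Dunkl transform to the classical Dunkl inversion by exploiting the factorization identity (\ref{relDTvsQPDT}) established in Theorem \ref{cont1QPDT}. Writing $h(v)=e^{-i(av^2+dv)}f(v)$, that identity is equivalent to
$$\mathcal{D}_\mu(h)(w/b)=(ib)^{\mu+1}e^{i(cw^2+ew)}\,\mathcal{D}^{a,b,c}_{d,e,\mu}[f](w),$$
which converts any inversion statement for $f$ into one for $h$ under the classical Dunkl transform, up to unimodular phase factors and a dilation in the frequency variable.

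First I would verify that the hypotheses of the Dunkl inversion formula apply to $h$. Since $|h(v)|=|f(v)|$ the membership $f\in L^{1}_{\mu}(\mathbb{R})$ gives $h\in L^{1}_{\mu}(\mathbb{R})$, and from the identity above one has $|\mathcal{D}_\mu(h)(\xi)|=|b|^{\mu+1}|\mathcal{D}^{a,b,c}_{d,e,\mu}[f](b\xi)|$, so the change of variable $\xi=w/b$ together with the assumption $\mathcal{D}^{a,b,c}_{d,e,\mu}[f]\in L^{1}_{\mu}(\mathbb{R})$ yields $\mathcal{D}_\mu(h)\in L^{1}_{\mu}(\mathbb{R})$. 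The classical Dunkl inversion then gives
$$h(v)=c_\mu\int_{\mathbb{R}}E_\mu(i\xi,v)\,\mathcal{D}_\mu(h)(\xi)\,|\xi|^{2\mu+1}\,d\xi.$$

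Next I would substitute the expression for $\mathcal{D}_\mu(h)(\xi)$ into this formula, perform the inverse change of variable $w=b\xi$ (absorbing the Jacobian $|b|^{-(2\mu+2)}$), and finally multiply through by $e^{i(av^2+dv)}$ to pass from $h$ back to $f$. The resulting integrand has the phase $e^{i(av^2+cw^2+dv+ew)}$ multiplied by $E_\mu(iw/b,v)$, which, using the symmetry $E_\mu(iv/b,w)=E_\mu(iw/b,v)$ evident from the series representation (\ref{seriesb}) of the normalized spherical Bessel function, is exactly $\Psi^{-c,-b,-a}_{-e,-d,\mu}(v,w)$ as defined in (\ref{QFDTK}) with the roles of $w$ and $v$ interchanged and parameters negated.

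The main obstacle is the constant bookkeeping: one must check that the prefactor $(ib)^{\mu+1}$ coming from the factorization formula, combined with the Jacobian $|b|^{-(2\mu+2)}$ from the rescaling $w=b\xi$, collapses to $(-ib)^{-(\mu+1)}$. This rests on the elementary identity $(-i)^{-1}=i$, which yields $(ib)^{\mu+1}/b^{2\mu+2}=1/(-ib)^{\mu+1}$ (for $b>0$), with the sign of $b$ handled consistently through the chosen branch of the fractional power. Once this algebraic reduction is performed, the inversion formula (\ref{InvQF.DT}) drops out immediately, so this step, together with the Fubini-type justification of the interchange of variables already guaranteed by the $L^1_\mu$ hypothesis, completes the proof.
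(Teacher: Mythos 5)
Your proposal is correct, and it reaches the result by a route that differs in its mechanics from the paper's, even though both ultimately rest on the classical Dunkl inversion applied to the same auxiliary function $h(v)=e^{-i(av^2+dv)}f(v)$. The paper expands the composition $\mathcal{D}^{-c,-b,-a}_{-e,-d,\mu}\circ\mathcal{D}^{a,b,c}_{d,e,\mu}$ as a double integral, invokes Fubini to interchange the order of integration, simplifies the product of the two kernels (the $cw^2$ and $ew$ phases cancel), rescales $z=w/b$, and only then recognizes the inner integral as $\mathcal{D}_\mu$ applied to $h$, closing with $\mathcal{D}_\mu^{-1}(\mathcal{D}_\mu h)=h$. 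You instead start from the factorization identity (\ref{relDTvsQPDT}), solve it for $\mathcal{D}_\mu(h)(w/b)$, and feed that directly into the Dunkl inversion formula; the composed operator never appears as a double integral. Your route buys two things. First, you explicitly verify the hypotheses of the Dunkl inversion theorem, namely $h\in L^1_\mu(\mathbb{R})$ and, via the change of variable $\xi=w/b$, $\mathcal{D}_\mu(h)\in L^1_\mu(\mathbb{R})$ from the assumption $\mathcal{D}^{a,b,c}_{d,e,\mu}[f]\in L^1_\mu(\mathbb{R})$ --- a point the paper uses implicitly but never checks. Second, you avoid the paper's Fubini step, which is delicate as stated: after the interchange the paper isolates the inner integral $\int_{\mathbb{R}}E_\mu(iz,v)E_\mu(-iz,u)\,|z|^{2\mu+1}\,dz$, which is not absolutely convergent, so the interchange is not literally justified by the $L^1_\mu$ hypotheses alone and would need a regularization argument; your version sidesteps this entirely. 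Two cosmetic remarks: the symmetry $E_\mu(iv/b,w)=E_\mu(iw/b,v)$ that you need follows from (\ref{dkernel}) (the kernel depends only on the product of its arguments) rather than from (\ref{seriesb}); and the branch bookkeeping $(ib)^{\mu+1}(-ib)^{\mu+1}=|b|^{2\mu+2}$ that you flag is exactly the convention the paper itself adopts, so your constant reduction matches theirs.
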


\begin{proof}
	Let $f$ be a function belongs to $ L^{1}_{\mu}(\mathbb{R})$ such that $\mathcal{D}^{a,b,c}_{d,e,\mu}[f] \in L^{1}_{\mu}(\mathbb{R})$. Then we have
	
	\begin{eqnarray*}
		\left( \mathcal{D}^{-c,-b,-a}_{-e,-d,\mu} \left( \mathcal{D}^{a,b,c}_{d,e,\mu}[f] \right)(w) \right)(v) 
		&=& \frac{c_\mu}{(-i b)^{\mu+1}} \int_{\mathbb{R}} \Psi^{-c,-b,-a}_{-e,-d,\mu}(v,w) \mathcal{D}^{a,b,c}_{d,e,\mu}[f](w) |w|^{2\mu+1} dw \\
		&=& \frac{c_\mu}{(-i b)^{\mu+1}} \int_{\mathbb{R}} \Psi^{-c,-b,-a}_{-e,-d,\mu}(v,w) \\
		&& \times \left[ \frac{c_\mu}{(i b)^{\mu+1}} \int_{\mathbb{R}} \Psi^{a,b,c}_{d,e,\mu}(w,u) f(u) |u|^{2\mu+1} du \right] |w|^{2\mu+1} dw.
	\end{eqnarray*}
	By Fubini's theorem, we can interchange the order of integration, we get
	
	\begin{eqnarray*}
			\left( \mathcal{D}^{-c,-b,-a}_{-e,-d,\mu} \left( \mathcal{D}^{a,b,c}_{d,e,\mu}[f] \right)(w) \right)(v) &=& \frac{c_\mu^2}{(-i b)^{\mu+1}(i b)^{\mu+1}} \int_{\mathbb{R}} \int_{\mathbb{R}} \Psi^{-c,-b,-a}_{-e,-d,\mu}(v,w) \Psi^{a,b,c}_{d,e,\mu}(w,u)
			\\
			\\
			&& \times f(u) |u|^{2\mu+1} |w|^{2\mu+1} du dw.
	\end{eqnarray*}
   By  computing the product of the kernels, we obtain
   
	\begin{eqnarray*}
		\Psi^{-c,-b,-a}_{-e,-d,\mu}(v,w) \Psi^{a,b,c}_{d,e,\mu}(w,u) 
		&=& e^{i(cw^2 + av^2 + ew + dv)} E_{\mu}(i w / b, v) 
		\\ 
		&& \times e^{-i(au^2 + cw^2 + du + ew)} E_{\mu}(-i w / b, u) \\
		&=& e^{i(av^2 + dv)} e^{-i(au^2 + du)} E_{\mu}(i w / b, v) E_{\mu}(-i w / b, u).
	\end{eqnarray*}
	Therefore, we have
	
	\begin{eqnarray*}
		\left( \mathcal{D}^{-c,-b,-a}_{-e,-d,\mu} \left( \mathcal{D}^{a,b,c}_{d,e,\mu}[f] \right)(w) \right)(v) 
		&=& \frac{c_\mu^2}{(-i b)^{\mu+1}(i b)^{\mu+1}} e^{i(av^2 + dv)} \int_{\mathbb{R}} e^{-i(au^2 + du)} f(u) |u|^{2\mu+1} \\
		&& \times \left[ \int_{\mathbb{R}} E_{\mu}(i w / b, v) E_{\mu}(-i w / b, u) |w|^{2\mu+1} dw \right] du.
	\end{eqnarray*}
	Making the change of variable $z = w b^{-1}$, we obtain
	
	\begin{eqnarray*}
		\left( \mathcal{D}^{-c,-b,-a}_{-e,-d,\mu} \left( \mathcal{D}^{a,b,c}_{d,e,\mu}[f] \right)(w) \right)(v) 
		&=& c_\mu^2 e^{i(av^2 + dv)} \int_{\mathbb{R}} e^{-i(au^2 + du)} f(u) |u|^{2\mu+1} \\
		&& \times \left[ \int_{\mathbb{R}} E_{\mu}(i z, v) E_{\mu}(-i z, u) |z|^{2\mu+1} dz \right] du
		\\
		&=& c_\mu e^{i(av^2 + dv)} \int_{\mathbb{R}} E_{\mu}(i z, v)  |z|^{2\mu+1} \\
		&& \times \left[ c_\mu\int_{\mathbb{R}} e^{-i(au^2 + du)} f(u) E_{\mu}(-i z, u) |u|^{2\mu+1} du \right] dz \\
		&=& c_\mu e^{i(av^2 + dv)} \int_{\mathbb{R}} E_{\mu}(i z, v) \mathcal{D}_\mu{g}(z) |z|^{2\mu+1} dz,
	\end{eqnarray*}
	where $g(u)=e^{-i\left(a u^2+d u\right)} f(u)$.
	
	Therefore, we get
	
	\begin{eqnarray*}
		\left( \mathcal{D}^{-c,-b,-a}_{-e,-d,\mu} \left( \mathcal{D}^{a,b,c}_{d,e,\mu}[f] \right)(w) \right)(v) 
		&=&  e^{i(av^2 + dv)} \left[  \mathcal{D}_\mu^{-1} \left(\mathcal{D}_\mu{g}\right) (v)\right] \\
		&=&  e^{i(av^2 + dv)} \left[  \mathcal{D}_\mu \left(\mathcal{D}_\mu{g}\right) (-v)\right] \\
		&=&  e^{i(av^2 + dv)} \left[  e^{-i(av^2 + dv)} f(v)\right]\\
		&=& f(v).
	\end{eqnarray*}
This confirms that the quadratic-phase Dunkl transform is invertible when, with the inverse given by
	\[
	\mathcal{D}^{-c,-b,-a}_{-e,-d,\mu} \circ \mathcal{D}^{a,b,c}_{d,e,\mu} = \text{id}.
	\]
\end{proof}

Now we state and prove the Parseval formula for the quadratic phase Dunkl transform.
\begin{theorem}[Parseval’s formula]
	Let \( a, b, c, d, e \in \mathbb{R}\),  $\mu \geq -1/2$ and  $f$ and $g$ be two functions belonging to $L^{2}_{\mu}(\mathbb{R})$, then we have

		\begin{equation}\label{PFQFDT}
			\int_{\mathbb{R}} f(v) \overline{g(v)} |v|^{2\mu+1} dv = \int_{\mathbb{R}} \mathcal{D}^{a,b,c}_{d,e,\mu}[f](w) \overline{\mathcal{D}^{a,b,c}_{d,e,\mu}[g](w)} |w|^{2\mu+1} d w.
		\end{equation}
\end{theorem}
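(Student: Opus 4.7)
The plan is to reduce the statement to the classical Dunkl Parseval formula (recalled in the Preliminaries) via the factorization identity (\ref{relDTvsQPDT}) already proved in Theorem \ref{cont1QPDT}. The key observation is that the only genuinely new ingredient in the quadratic-phase transform is a chirp multiplier, which is unimodular and therefore preserves $L^2_\mu$-inner products.

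First, I would introduce the auxiliary functions $h(v)=e^{-i(av^2+dv)}f(v)$ and $k(v)=e^{-i(av^2+dv)}g(v)$. Since $|e^{-i(av^2+dv)}|=1$, one immediately has $h,k\in L^{2}_{\mu}(\mathbb{R})$ with
\[
\int_{\mathbb{R}} f(v)\,\overline{g(v)}\,|v|^{2\mu+1}\,dv \;=\; \int_{\mathbb{R}} h(v)\,\overline{k(v)}\,|v|^{2\mu+1}\,dv.
\]
This handles the left-hand side of (\ref{PFQFDT}) completely.

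Next, I would plug the representation (\ref{relDTvsQPDT}) into the right-hand side of (\ref{PFQFDT}). The outer chirp $e^{-i(cw^2+ew)}$ cancels with its conjugate, and the constant $(ib)^{-(\mu+1)}$ paired with its conjugate produces $|b|^{-(2\mu+2)}$, giving
\[
\mathcal{D}^{a,b,c}_{d,e,\mu}[f](w)\,\overline{\mathcal{D}^{a,b,c}_{d,e,\mu}[g](w)} \;=\; \frac{1}{|b|^{2\mu+2}}\,\mathcal{D}_\mu(h)(w/b)\,\overline{\mathcal{D}_\mu(k)(w/b)}.
\]
Integrating against $|w|^{2\mu+1}\,dw$ and performing the change of variable $z=w/b$ (noting $dw=|b|\,dz$ and $|w|^{2\mu+1}=|b|^{2\mu+1}|z|^{2\mu+1}$), the three powers of $|b|$ precisely balance, and the right-hand side of (\ref{PFQFDT}) collapses to $\int_{\mathbb{R}}\mathcal{D}_\mu(h)(z)\overline{\mathcal{D}_\mu(k)(z)}|z|^{2\mu+1}dz$. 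Applying the classical Dunkl Parseval formula recalled in Section~\ref{sec2} converts this back to $\int h\,\overline{k}\,|v|^{2\mu+1}dv$, closing the loop with the first step.

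The only delicate point is that the identity (\ref{relDTvsQPDT}) was stated in Theorem~\ref{cont1QPDT} for $f\in\mathcal{S}(\mathbb{R})$. I would therefore first establish (\ref{PFQFDT}) for $f,g\in\mathcal{S}(\mathbb{R})$, and then extend it to arbitrary $f,g\in L^{2}_{\mu}(\mathbb{R})$ by a standard density argument: both sides are continuous sesquilinear forms on $L^{2}_{\mu}(\mathbb{R})\times L^{2}_{\mu}(\mathbb{R})$, the left-hand side trivially by Cauchy--Schwarz, and the right-hand side because the above computation shows, in particular, that $\|\mathcal{D}^{a,b,c}_{d,e,\mu}[f]\|_{\mu,2}=\|f\|_{\mu,2}$ for Schwartz $f$, so $\mathcal{D}^{a,b,c}_{d,e,\mu}$ extends to an isometry on $L^{2}_{\mu}(\mathbb{R})$. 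I do not anticipate any substantive obstacle beyond careful book-keeping of the constants $(ib)^{\mu+1}$ and the Jacobian of the rescaling $z=w/b$.
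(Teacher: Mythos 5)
Your proposal follows essentially the same route as the paper: substitute the factorization (\ref{relDTvsQPDT}) into the right-hand side, cancel the unimodular chirps, absorb the constants into $|b|^{-(2\mu+2)}$, rescale $z=w/b$, and invoke the classical Dunkl Parseval formula. In fact you are slightly more careful than the paper, which applies (\ref{relDTvsQPDT}) directly to $f,g\in L^{2}_{\mu}(\mathbb{R})$ even though it was stated for Schwartz functions; your explicit density step fills that gap (the paper defers this kind of extension to its subsequent Plancherel theorem).
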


\begin{proof}
 Let  $f$ and $g$ be two functions in $L^{2}_{\mu}(\mathbb{R})$. According to the relation (\ref{relDTvsQPDT}) between the Dunkl transform and the quadratic phase Dunkl transform, we have 
 
 \begin{equation*}
 	\mathcal{D}^{a,b,c}_{d,e,\mu}[f](w) = \frac{1}{( i b)^{\mu+1}} e^{-i(cw^2+ew)} \mathcal{D}_\mu(f_1)(w / b),
 \end{equation*}
 with $f_1(v)=e^{-i(av^2+dv)}f(v),$  and 
 
  \begin{equation*}
 	\overline{\mathcal{D}^{a,b,c}_{d,e,\mu}[g](w)} = \frac{1}{( -i b)^{\mu+1}} e^{i(cw^2+ew)} \overline{\mathcal{D}_\mu(g_1)(w / b)},
 \end{equation*}
 with $g_1(v)=e^{-i(av^2+dv)}g(v),$

	\begin{eqnarray*}
     \int_{\mathbb{R}} \mathcal{D}^{a,b,c}_{d,e,\mu}[f](w) \overline{\mathcal{D}^{a,b,c}_{d,e,\mu}[g](w)} |w|^{2\mu+1} d w
     &=& \frac{1}{( i b)^{\mu+1}}\frac{1}{( -i b)^{\mu+1}}\int_{\mathbb{R}} \mathcal{D}_\mu(f_1)(w / b) 
     \\ \\
     && \times \overline{\mathcal{D}_\mu(g_1)(w / b)} |w|^{2\mu+1} d w
     \\
      &=& \frac{1}{|b|^{2\mu+2}} \int_{\mathbb{R}} \mathcal{D}_\mu(f_1)(w / b) \overline{\mathcal{D}_\mu(g_1)(w / b)} |w|^{2\mu+1} d w.
     \end{eqnarray*}
	Making the change of variable $(z = w b^{-1})$, we obtain

 	\begin{eqnarray*}
 	\int_{\mathbb{R}} \mathcal{D}^{a,b,c}_{d,e,\mu}[f](w) \overline{\mathcal{D}^{a,b,c}_{d,e,\mu}[g](w)} |w|^{2\mu+1} d w
 	&=&  \int_{\mathbb{R}} \mathcal{D}_\mu(f_1)(z) \overline{\mathcal{D}_\mu(g_1)(z)} |z|^{2\mu+1} d z.
 \end{eqnarray*}
 According the Plancherel theorem for the Dunkl transform, we get
 
  	\begin{eqnarray*}
 	\int_{\mathbb{R}} \mathcal{D}^{a,b,c}_{d,e,\mu}[f](w) \overline{\mathcal{D}^{a,b,c}_{d,e,\mu}[g](w)} |w|^{2\mu+1} d w
 	&=&  \int_{\mathbb{R}} f_1(z) \overline{g_1(z)} |z|^{2\mu+1} d z
 	\\
 		&=&  \int_{\mathbb{R}} e^{-i(az^2+dz)}f(z) \overline{e^{-i(az^2+dz)}g(z)} |z|^{2\mu+1} d z
  \\
    	&=&  \int_{\mathbb{R}} f(z) \overline{g(z)} |z|^{2\mu+1} d z		
 \end{eqnarray*}
This establishes the desired identity, which achieves the proof. 
\end{proof}
 
 \begin{corollary}\label{PlanchQPDT_S}
	Let \( a, b, c, d, e \in \mathbb{R}\) such that $b\neq 0$,  $\mu \geq -1/2$ and $f$ be a function belonging to $\mathcal{S} (\mathbb{R})$, then we have
	
	\begin{equation*}
         \|\mathcal{D}^{a,b,c}_{d,e,\mu}[f]\|_{\mu,2}=  \|f\|_{\mu,2}.
	\end{equation*}
 \end{corollary}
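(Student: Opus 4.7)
The plan is to deduce the $L^2_\mu$ isometry from Parseval's formula \eqref{PFQFDT} proved immediately above by the standard polarization trick of setting $g = f$. Since $\mathcal{S}(\mathbb{R}) \subset L^2_\mu(\mathbb{R})$ (rapid decay guarantees integrability against the weight $|v|^{2\mu+1}$ for any $\mu \geq -1/2$), the hypothesis of the Parseval theorem is satisfied, so we may take $g = f$ in \eqref{PFQFDT} without further justification.

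With this choice the left-hand side of \eqref{PFQFDT} becomes
\[
\int_{\mathbb{R}} f(v)\overline{f(v)}\,|v|^{2\mu+1}\,dv \;=\; \int_{\mathbb{R}} |f(v)|^{2}\,|v|^{2\mu+1}\,dv \;=\; \|f\|_{\mu,2}^{\,2},
\]
and the right-hand side becomes
\[
\int_{\mathbb{R}} \bigl|\mathcal{D}^{a,b,c}_{d,e,\mu}[f](w)\bigr|^{2}\,|w|^{2\mu+1}\,dw \;=\; \bigl\|\mathcal{D}^{a,b,c}_{d,e,\mu}[f]\bigr\|_{\mu,2}^{\,2}.
\]
Equating the two and taking (positive) square roots will give the desired identity $\|\mathcal{D}^{a,b,c}_{d,e,\mu}[f]\|_{\mu,2} = \|f\|_{\mu,2}$.

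There is essentially no obstacle here: the corollary is a formal consequence of the preceding Parseval theorem. The only small point worth noting in the write-up is that, because $f \in \mathcal{S}(\mathbb{R})$, the transform $\mathcal{D}^{a,b,c}_{d,e,\mu}[f]$ is indeed a well-defined measurable function (it is continuous by Theorem \ref{cont1QPDT}), so the integral on the right is meaningful and the square root operation is unambiguous.
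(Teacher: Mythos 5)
Your proposal is correct and matches the paper's (implicit) argument: the paper presents this statement as an immediate corollary of the Parseval formula \eqref{PFQFDT}, obtained exactly as you do by setting $g=f$ and taking square roots. Nothing further is needed.
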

 
\begin{theorem}\label{PlanchQPDT}
	Let \( a, b, c, d, e \in \mathbb{R}\) such that $b\neq 0$,  $\mu \geq -1/2$.
 If $f \in L^{1}_{\mu}(\mathbb{R}) \cap L^{2}_{\mu}(\mathbb{R})$, then its quadratic-phase Dunkl transform $\mathcal{D}^{a,b,c}_{d,e,\mu}[f]$ belongs to $L^{2}_{\mu}(\mathbb{R})$, and we have
       	\begin{equation}\label{PlanchQPDT_L2}
       	\|\mathcal{D}^{a,b,c}_{d,e,\mu}[f]\|_{\mu,2}=  \|f\|_{\mu,2}.
       \end{equation}
\end{theorem}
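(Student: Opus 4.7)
The approach is to reduce the claimed Plancherel identity to the classical Plancherel theorem for the Dunkl transform, by invoking the factorization identity (\ref{relDTvsQPDT}) established in Theorem \ref{cont1QPDT}:
\[
\mathcal{D}^{a,b,c}_{d,e,\mu}[f](w) = \frac{1}{(ib)^{\mu+1}} e^{-i(cw^2+ew)}\, \mathcal{D}_\mu(h)(w/b), \qquad h(v) = e^{-i(av^2+dv)} f(v).
\]
Although Theorem \ref{cont1QPDT} only states this identity for $f \in \mathcal{S}(\mathbb{R})$, the derivation consists solely of factoring the $w$-dependent chirp out of the integral and recognizing what remains as the Dunkl transform of $h$; it therefore extends verbatim to any $f \in L^1_\mu(\mathbb{R})$, and in particular to the $f \in L^1_\mu \cap L^2_\mu$ of the hypothesis.

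Since the chirp $e^{-i(av^2+dv)}$ is unimodular, one has $|h(v)| = |f(v)|$ pointwise, so the hypothesis $f \in L^1_\mu \cap L^2_\mu$ transfers to $h$. In particular $h \in L^2_\mu(\mathbb{R})$, and the classical Plancherel theorem for the Dunkl transform recalled in Section \ref{sec2} gives $\mathcal{D}_\mu(h) \in L^2_\mu(\mathbb{R})$ together with
\[
\|\mathcal{D}_\mu(h)\|_{\mu,2} = \|h\|_{\mu,2} = \|f\|_{\mu,2}.
\]

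The final step is a short computation. Taking absolute squares in the factorization wipes out the unimodular factor $e^{-i(cw^2+ew)}$ and converts the prefactor to $|b|^{-(2\mu+2)}$, so that
\[
\|\mathcal{D}^{a,b,c}_{d,e,\mu}[f]\|_{\mu,2}^2 = \frac{1}{|b|^{2\mu+2}} \int_{\mathbb{R}} \bigl|\mathcal{D}_\mu(h)(w/b)\bigr|^2 |w|^{2\mu+1}\, dw.
\]
The change of variable $z = w/b$ contributes a Jacobian $|b|$ from $dw$ and a factor $|b|^{2\mu+1}$ from $|w|^{2\mu+1}$, together cancelling $|b|^{2\mu+2}$ exactly and leaving $\|\mathcal{D}_\mu(h)\|_{\mu,2}^2 = \|f\|_{\mu,2}^2$, as desired. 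I do not foresee any serious obstacle: the only minor point to justify is that the factorization (\ref{relDTvsQPDT}) holds on $L^1_\mu$, which is immediate since every integrand converges absolutely under the hypothesis. Alternatively, the same identity may be obtained by specializing the Parseval formula (\ref{PFQFDT}) to $g = f$, a legitimate choice because $L^1_\mu \cap L^2_\mu \subset L^2_\mu$.
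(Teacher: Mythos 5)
Your proof is correct, but it follows a genuinely different route from the paper. The paper's argument is a density-and-duality one: it first records the identity for Schwartz functions (Corollary \ref{PlanchQPDT_S}), uses density of $\mathcal{S}(\mathbb{R})$ in $L^{2}_{\mu}(\mathbb{R})$ to obtain a continuous extension $\mathbb{D}^{a,b,c}_{d,e,\mu}$ of the transform to all of $L^{2}_{\mu}(\mathbb{R})$, and then shows via the Parseval pairing against Schwartz test functions $g$ that the integral-defined $\mathcal{D}^{a,b,c}_{d,e,\mu}[f]$ coincides a.e.\ with $\mathbb{D}^{a,b,c}_{d,e,\mu}[f]$ when $f \in L^{1}_{\mu}(\mathbb{R}) \cap L^{2}_{\mu}(\mathbb{R})$; the norm identity then follows from the Schwartz case by continuity. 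You instead bypass the extension entirely: you note that the factorization \eqref{relDTvsQPDT} holds verbatim on $L^{1}_{\mu}(\mathbb{R})$ (true, since the derivation only factors a unimodular chirp out of an absolutely convergent integral), and reduce directly to the classical Dunkl Plancherel theorem applied to $h$, finishing with the change of variable $z = w/b$. Your route is shorter and more self-contained, and it makes the role of the chirps and the Jacobian explicit; its one tacit ingredient is that the Dunkl Plancherel identity holds for the \emph{integral-defined} transform of an $L^{1}_{\mu} \cap L^{2}_{\mu}$ function (the standard $L^{1}\cap L^{2}$ form of Plancherel), which is marginally stronger than the version the paper records for $f \in L^{2}_{\mu}(\mathbb{R})$ via the extended operator, though it is exactly how that theorem is usually proved. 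The paper's longer detour buys the a.e.\ identification of the integral transform with the unitary $L^{2}$-extension as an explicit by-product, which your argument obtains only implicitly.
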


\begin{proof} 
		Let \( a, b, c, d, e \in \mathbb{R}\) such that $b\neq 0$,  $\mu \geq -1/2$.
 According to Corollary \ref{PlanchQPDT_S} and the density of $\mathcal{S} (\mathbb{R})$ in $L^{2}_{\mu}(\mathbb{R})$, we obtain a unique continuous operator $\mathbb{D}^{a,b,c}_{d,e,\mu}[f]$ on $L^{2}_{\mu}(\mathbb{R})$, that agrees with $\mathcal{D}^{a,b,c}_{d,e,\mu}[f]$ on  $\mathcal{S} (\mathbb{R})$. Let $f$ and $g$ be two function in $\mathcal{S} (\mathbb{R})$, then we have 
	
	  	\begin{eqnarray*}
		\int_{\mathbb{R}} \mathbb{D}^{a,b,c}_{d,e,\mu}[f](w) \overline{\mathcal{D}^{a,b,c}_{d,e,\mu}[g](w)} |w|^{2\mu+1} d w
		&=& \int_{\mathbb{R}} \mathcal{D}^{a,b,c}_{d,e,\mu}[f](w) \overline{\mathcal{D}^{a,b,c}_{d,e,\mu}[g](w)} |w|^{2\mu+1} d w  \\
		&=& \int_{\mathbb{R}} \mathcal{D}^{a,b,c}_{d,e,\mu}[f](w) \overline{\mathbb{D}^{a,b,c}_{d,e,\mu}[g](w)} |w|^{2\mu+1} d w.
	    \end{eqnarray*}
	Hence, by  density of $\mathcal{S} (\mathbb{R})$ in $L^{2}_{\mu}(\mathbb{R})$, we have that for all $f$ and $g$ in $L^{2}_{\mu}(\mathbb{R})$, 
	
		  	\begin{eqnarray*}
		\int_{\mathbb{R}} \mathbb{D}^{a,b,c}_{d,e,\mu}[f](w) \overline{\mathcal{D}^{a,b,c}_{d,e,\mu}[g](w)} |w|^{2\mu+1} d w
		&=& \int_{\mathbb{R}} \mathcal{D}^{a,b,c}_{d,e,\mu}[f](w) \overline{\mathbb{D}^{a,b,c}_{d,e,\mu}[g](w)} |w|^{2\mu+1} d w.
	\end{eqnarray*}
	
  Now, suppose that $f$ is an element of $ L^{1}_{\mu}(\mathbb{R}) \cap L^{2}_{\mu}(\mathbb{R})$ and $g$ belongs to $\mathcal{S} (\mathbb{R})$. Then, we have
  
  \begin{eqnarray*}
  	\int_{\mathbb{R}} \mathcal{D}^{a,b,c}_{d,e,\mu}[f](w) \overline{\mathcal{D}^{a,b,c}_{d,e,\mu}[g](w)} |w|^{2\mu+1} d w
  	&=& \int_{\mathbb{R}} \mathcal{D}^{a,b,c}_{d,e,\mu}[f](w) \overline{\mathbb{D}^{a,b,c}_{d,e,\mu}[g](w)} |w|^{2\mu+1} d w  \\
  	&=& \int_{\mathbb{R}} \mathbb{D}^{a,b,c}_{d,e,\mu}[f](w) \overline{\mathcal{D}^{a,b,c}_{d,e,\mu}[g](w)} |w|^{2\mu+1} d w.
  \end{eqnarray*}
  Therefore, $\mathcal{D}^{a,b,c}_{d,e,\mu}[f]=\mathbb{D}^{a,b,c}_{d,e,\mu}[f]$, a.e., which proves that for all $f \in L^{1}_{\mu}(\mathbb{R}) \cap L^{2}_{\mu}(\mathbb{R})$, its quadratic-phase Dunkl transform $\mathcal{D}^{a,b,c}_{d,e,\mu}[f]$ belongs to $L^{2}_{\mu}(\mathbb{R})$. For the identity (\ref{PlanchQPDT_L2}), follows from  Corollary \ref{PlanchQPDT_S}.
\end{proof}

\section{Quadratic-phase Dunkl  translation} \label{sec4}
In this section we introduce the quadratic-phase Dunkl translation and we develop its fundamental properties. Let us, firstly, recall the definition of the Dunkl translation.

\begin{definition}[Dunkl translation]  The translation operator associated with the Dunkl transform is defined by

\begin{equation}\label{Dtransl}
	\tau_{v,\mu}[f](w)= \int_\mathbb{R} f(\kappa) \mathcal{W}_{\mu}(w,v,\kappa) |\kappa|^{2\mu+1} d\kappa,
\end{equation}
	where  $\mathcal{W}_{\mu}(w,v,\kappa)$ represent the kernel of of Dunkl translation and it  is a nonnegative, symmetric function in three variables, compactly supported in $\kappa$,  given by 

\begin{equation}\label{kernelDtransl}
\mathcal{W}_{\mu}
=\frac{1}{2}\left\{
1-\sigma_{w, v, \kappa}
+\sigma_{\kappa, w, v}
+\sigma_{\kappa, v, w}
\right\}\,
K_{\mu}(|w|, |v|, |\kappa|),
\end{equation}
with

\[
\sigma_{w, v, \kappa}=\frac{w^{2}+v^{2}-\kappa^{2}}{2wv},
\qquad w,v\in\mathbb{R}^{*},
\]
and $K_{\mu}(w, v, \kappa)$ defined for all  \(w,v,\kappa>0\), as below

\[
K_{\mu}(w, v, \kappa)=
\begin{cases}
	\displaystyle
	\frac{2^{\,1-2\mu}\Gamma(\mu+1)}
	{\sqrt{\pi}\,\Gamma(\mu+1/2)}
	 &\frac{\left[((w+v)^{2}-\kappa^{2})(\kappa^{2}-(w-v)^{2})\right]^{\mu-1/2}}
	{(w v \kappa)^{2\mu}}, \\
	& \text{if } |w-v|<\kappa<w+v,
	\\[1em]
	0,
	& \text{otherwise}.
\end{cases}
\]
\end{definition}

The Dunkl translation have the following properties. 

\begin{proposition}
\begin{enumerate}
	\item   For all $f \in L^{p}_{\mu}(\mathbb{R}), 1 \leq p \leq \infty, \mu \geq  -1 / 2$, and for every $w,v \in \mathbb{R}$, we have

$$
\tau_{v,\mu} f(w)=\tau_{w,\mu} f(v),
$$
and

$$
\left\|\tau_{v,\mu} f\right\|_{\mu,p} \leq 4\|f\|_{\mu,p}.
$$
\item  If  $f  \in L^{1}_{\mu}(\mathbb{R})$, and $ v \in \mathbb{R}$, then we have
$$ \mathcal{D}_\mu\left(\tau_{v,\mu} f\right)(w)=E_{\mu}(i w, v)\left(\mathcal{D}_\mu f\right)(w).$$
\end{enumerate}

\end{proposition}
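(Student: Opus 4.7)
For part 1, the plan is to argue the symmetry directly from the explicit kernel and the $L^p$ estimate via a Schur-type bound. The symmetry $\tau_{v,\mu}f(w)=\tau_{w,\mu}f(v)$ follows by inspecting (\ref{kernelDtransl}): $K_\mu(|w|,|v|,|\kappa|)$ is fully symmetric in its three arguments, while in the bracket $1-\sigma_{w,v,\kappa}+\sigma_{\kappa,w,v}+\sigma_{\kappa,v,w}$ the first $\sigma$ term is already symmetric under $w\leftrightarrow v$ and the two remaining $\sigma$ terms simply swap with each other. Hence $\mathcal{W}_\mu(w,v,\kappa)=\mathcal{W}_\mu(v,w,\kappa)$, and a relabelling of the dummy variable in (\ref{Dtransl}) yields the claim.

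For the $L^p$ bound, I would first establish the pointwise estimate $|\mathcal{W}_\mu(w,v,\kappa)|\leq 2\,K_\mu(|w|,|v|,|\kappa|)$ by noting that on the support $\{|w-v|<|\kappa|<w+v\}$ of $K_\mu$ each $\sigma$ term represents, via the law of cosines, the cosine of an angle in the triangle with sides $|w|,|v|,|\kappa|$, so $|\sigma|\leq 1$. Combined with the normalization $\int_\mathbb{R}K_\mu(|w|,|v|,|\kappa|)|\kappa|^{2\mu+1}d\kappa=2$, which follows by specializing the Dunkl product formula at $\lambda=0$ (so that translation preserves constants), together with its analog in the first variable, Schur's test delivers $\|\tau_{v,\mu}f\|_{\mu,p}\leq 4\|f\|_{\mu,p}$ uniformly for $p\in[1,\infty]$, the factor $4$ arising as $2\cdot 2$.

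For part 2, a short Fubini computation suffices. Substituting (\ref{Dtransl}) into the definition of the Dunkl transform and exchanging the order of integration produces
\[
\mathcal{D}_\mu(\tau_{v,\mu}f)(w)=c_\mu\int_\mathbb{R}f(\kappa)\left[\int_\mathbb{R}E_\mu(-iw,y)\,\mathcal{W}_\mu(y,v,\kappa)|y|^{2\mu+1}dy\right]|\kappa|^{2\mu+1}d\kappa.
\]
The inner integral is precisely the Dunkl product formula, which asserts that $E_\mu(iw,\cdot)$ is an eigenfunction of $\tau_{v,\mu}$ with eigenvalue $E_\mu(iw,v)$; once this identity is invoked, $E_\mu(iw,v)$ pulls outside the outer integral and the residual integral against $f(\kappa)$ is exactly $\mathcal{D}_\mu f(w)$.

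The main obstacle is verifying the integral normalization of $K_\mu$ that feeds Schur's test. A direct computation from the beta-type integral representation in (\ref{kernelDtransl}) is possible but tedious; it is cleaner to argue abstractly from the invariance of constants under translation. Since this proposition is a classical result due to R\"osler, already invoked in the preliminaries, the full technical details can safely be deferred to the existing literature.
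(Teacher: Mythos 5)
The paper does not actually prove this proposition: it is stated in Section~4 as a recalled, classical fact about the Dunkl translation (due to R\"osler and collaborators), with no argument supplied. Your sketch therefore cannot be compared to an in-paper proof; what it can be compared to is the standard literature argument, and it essentially reproduces that argument correctly. The symmetry claim is right for the reason you give: $K_\mu$ is fully symmetric, $\sigma_{w,v,\kappa}$ is invariant under $w\leftrightarrow v$, and the two remaining $\sigma$-terms exchange, so $\mathcal{W}_\mu(w,v,\kappa)=\mathcal{W}_\mu(v,w,\kappa)$ and no relabelling of the integration variable is even needed. The bound $|\sigma|\le 1$ on the support $\bigl||w|-|v|\bigr|<|\kappa|<|w|+|v|$ is also correct even when $w$ or $v$ is negative (since $|w^2+v^2-\kappa^2|<2|wv|$ there), which gives $|\mathcal{W}_\mu|\le 2K_\mu$ and, with the normalization of $K_\mu$, the Schur bound $4=2\cdot 2$ uniformly in $p$; this matches the constant in the statement and the bound $\int_{\mathbb{R}}\mathcal{W}_\mu(w,v,\kappa)|\kappa|^{2\mu+1}\,d\kappa\le 4$ that the paper itself uses later. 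Part~2 via Fubini plus the product formula is the standard route and Fubini is legitimate here because $f\in L^1_\mu$, $|E_\mu|\le 1$ and $\mathcal{W}_\mu(w,v,\cdot)$ is integrable.

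Two small points to tighten. First, your justification of $\int_{\mathbb{R}}K_\mu(|w|,|v|,|\kappa|)|\kappa|^{2\mu+1}\,d\kappa=2$ by ``specializing the Dunkl product formula at $\lambda=0$'' is not quite the right reference: that specialization yields $\int_{\mathbb{R}}\mathcal{W}_\mu|\kappa|^{2\mu+1}\,d\kappa=1$, and since the two odd $\sigma$-terms integrate to zero while $\sigma_{w,v,\kappa}$ does not, this is a genuinely different identity from the one you need. The normalization of $K_\mu$ itself comes from the Bessel (Hankel) product formula of Watson evaluated at zero frequency, i.e.\ $\int_0^\infty K_\mu(w,v,\kappa)\kappa^{2\mu+1}\,d\kappa=1$, doubled by evenness in $\kappa$. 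Second, with the paper's convention $\mathcal{D}_\mu f(w)=c_\mu\int E_\mu(-iw,v)f(v)|v|^{2\mu+1}dv$, the Fubini computation naturally produces the eigenvalue $E_\mu(-iw,v)$ rather than $E_\mu(iw,v)$; the discrepancy is a sign-convention wrinkle in the paper's statement rather than an error in your argument, but you should be aware that your invocation of ``$E_\mu(iw,\cdot)$ is an eigenfunction'' does not literally match the kernel $E_\mu(-iw,y)$ appearing in your own display.
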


\begin{definition}[Quadratic-phase Dunkl translation]
	Let   \( a, b, c, d, e \in \mathbb{R}\) with $b\neq 0$, and $ \mu > -1/2$.  For $f \in \mathcal{C}(\mathbb{R})$, we define the quadratic-phase Dunkl translation operators  by
	
	\begin{equation}\label{QFDtransl}
		\tau ^{a,b,d}_{w,\mu}[f](v)= \int_\mathbb{R} f(\kappa) \mathcal{W}^{a,b,d}_{\mu}(w,v,\kappa) e^{i(a\kappa^2+d\kappa)}|\kappa|^{2\mu+1} d\kappa,
	\end{equation}
	where  $\mathcal{W}^{a,b,d}_{\mu}(w,v,\kappa)$ is the kernel of the quadratic-phase Dunkl translation operator given 
	as below 
	
	%
	%
	
	\begin{equation*}
		\mathcal{W}^{a,b,d}_{\mu}(w,v,\kappa)= \mathcal{W}_{\mu}(w,v,\kappa)e^{-i\left[ a(w^2+v^2+\kappa^2) +d(w+v+\kappa)\right]},
	\end{equation*}
with  $\mathcal{W}_{\mu}(w,v,\kappa)$, is given by the relation (\ref{kernelDtransl}).
\end{definition}

\begin{properties}
	Let   \( a, b, c, d, e \in \mathbb{R}\) such that $b\neq 0$, and $ \mu > -1/2$. For all $f,g \in \mathcal{C}(\mathbb{R})$ and
	$\alpha,\beta\in\mathbb{R}$, the quadratic-phase Fourier--Bessel translation operator satisfies the following properties.
	\begin{enumerate}
		\item Identity: $  \tau ^{a,b,d}_{0,\mu}=Id$.
		\item Symmetry: $  \tau ^{a,b,d}_{w,\mu}[f](v)=  \tau ^{a,b,d}_{v,\mu}[f](w)$.
		\item Linearity: $  \tau ^{a,b,d}_{w,\mu}[\alpha f+\beta g](v)= \alpha   \tau ^{a,b,d}_{w,\mu}[f](v) + \beta  \tau ^{a,b,d}_{w,\mu}[g](v)$.
	\end{enumerate}
\end{properties}

In the following, we prove the continuity of the quadratic-phase  translation operator from  $L^{p}_{\mu}(\mathbb{R}_+)$  into itself for all $p\in [1,\infty ]$.

\begin{proposition} For every function $h$ in $L^{p}_{\mu}(\mathbb{R})$, $p\in [1,\infty ]$ and $s\in [0,\infty)$, the function $\tau ^{a,b,d}_{w,\mu}[f]$ belongs to $L^{p}_{\mu}(\mathbb{R})$ and we have 
	
	\begin{equation}\label{contQFDTransl}
		\|\tau ^{a,b,d}_{w,\mu}[f]\|_{\mu,p} \leq 4\|f\|_{\mu,p}.
	\end{equation}
\end{proposition}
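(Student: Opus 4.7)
The plan is to reduce the bound directly to the classical Dunkl translation estimate by noticing that the quadratic-phase kernel differs from $\mathcal{W}_\mu(w,v,\kappa)$ only by unimodular exponential factors, and that the $\kappa$-dependent phase inside the kernel is exactly cancelled by the factor $e^{i(a\kappa^{2}+d\kappa)}$ appearing in the definition \eqref{QFDtransl}.

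First I would substitute the explicit formula for $\mathcal{W}^{a,b,d}_\mu(w,v,\kappa)$ into \eqref{QFDtransl} and pull the $\kappa$-independent exponential $e^{-i[a(w^{2}+v^{2})+d(w+v)]}$ outside the integral. The remaining $\kappa$-dependent phase $e^{-ia\kappa^{2}-id\kappa}$ cancels against $e^{ia\kappa^{2}+id\kappa}$, leaving exactly the classical Dunkl translation. This yields the pointwise identity
\begin{equation*}
\tau^{a,b,d}_{w,\mu}[f](v)=e^{-i\left[a(w^{2}+v^{2})+d(w+v)\right]}\,\tau_{w,\mu}[f](v).
\end{equation*}

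Second, since the prefactor has modulus one, I would take absolute values to obtain $|\tau^{a,b,d}_{w,\mu}[f](v)|=|\tau_{w,\mu}[f](v)|$ for every $v\in\mathbb{R}$. Integrating the $p$-th power against $|v|^{2\mu+1}dv$ (or taking the essential supremum when $p=\infty$) gives $\|\tau^{a,b,d}_{w,\mu}[f]\|_{\mu,p}=\|\tau_{w,\mu}[f]\|_{\mu,p}$, and the conclusion \eqref{contQFDTransl} follows from the previously stated continuity estimate $\|\tau_{w,\mu}f\|_{\mu,p}\le 4\|f\|_{\mu,p}$ for the classical Dunkl translation.

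There is no real obstacle here: the argument is essentially bookkeeping of the exponential phases. The only point that deserves care is verifying that the $\kappa$-phase inside $\mathcal{W}^{a,b,d}_\mu$ and the factor $e^{i(a\kappa^{2}+d\kappa)}$ in the integrand cancel exactly, so that the quadratic-phase translation is genuinely a unimodular modulation of $\tau_{w,\mu}[f]$. Once this cancellation is recorded, the $L^p_\mu$-boundedness is inherited from the Dunkl translation for every $p\in[1,\infty]$ with the same constant~$4$.
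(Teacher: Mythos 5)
Your proof is correct, and it takes a slightly different and in fact tighter route than the paper's. The paper only establishes the pointwise domination $\left|\tau^{a,b,d}_{w,\mu}[f](v)\right|\le \int_{\mathbb{R}}|f(\kappa)|\,\mathcal{W}_{\mu}(w,v,\kappa)\,|\kappa|^{2\mu+1}\,d\kappa$ and then re-derives the $L^{p}_{\mu}$ bound from scratch via H\"older's inequality together with the kernel-mass estimate $\int_{\mathbb{R}}\mathcal{W}_{\mu}(w,v,\kappa)|\kappa|^{2\mu+1}d\kappa\le 4$ (a Schur-test argument, with the cases $p=1,\infty$ treated separately). You instead record the exact identity
\begin{equation*}
\tau^{a,b,d}_{w,\mu}[f](v)=e^{-i\left[a(w^{2}+v^{2})+d(w+v)\right]}\,\tau_{w,\mu}[f](v),
\end{equation*}
which follows from the exact cancellation of the $\kappa$-phases in \eqref{QFDtransl} (this cancellation does hold: the kernel carries $e^{-i(a\kappa^{2}+d\kappa)}$ and the integrand carries $e^{i(a\kappa^{2}+d\kappa)}$), and then you simply transfer the norm through a unimodular factor and invoke the already-stated bound $\|\tau_{v,\mu}f\|_{\mu,p}\le 4\|f\|_{\mu,p}$ for the classical Dunkl translation. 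Both arguments hinge on the unimodularity of the phase factors, but yours yields a stronger structural statement (the quadratic-phase translation is exactly a unimodular modulation of the Dunkl translation, so the norms are equal, not merely comparable), handles all $p\in[1,\infty]$ uniformly without case-splitting, and avoids duplicating the interpolation argument. The only thing to be careful about, which you flagged, is the bookkeeping of which phase sits where; that checks out against the definitions in the paper.
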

\begin{proof} For $p=1$ and $\infty$, the proof is trivial. Let us now suppose $p\in (1,\infty)$ and $f$ belongs to $L^{p}_{\mu}(\mathbb{R}_+)$. Then from the definition of the quadratic-phase Dunkl translation operator (\ref{QFDtransl}), we have
	
	\begin{eqnarray*}  \left|  \tau ^{a,b,d}_{w,\mu}[f](v)\right|  &=& \left| \int_\mathbb{R} f(\kappa)  \mathcal{W}^{a,b,d}_{\mu}(w,v,\kappa) e^{au^2+du}|\kappa|^{2\mu+1} d\kappa\right| \\
		&\leq&  \int_\mathbb{R} |f(\kappa)| \left| \mathcal{W}^{a,b,d}_{\mu}(w,v,\kappa)\right| |\kappa|^{2\mu+1} d\kappa,\\
		&=&  \int_\mathbb{R} |f(\kappa)| \left| \mathcal{W}_{\mu}(w,v,\kappa)e^{-i\left[ a(w^2+v^2+\kappa^2) +d(w+v+\kappa)\right]}\right| |\kappa|^{2\mu+1} d\kappa\\
		&=&  \int_\mathbb{R} |f(\kappa)| \left| \mathcal{W}_{\mu}(w,v,\kappa)\right| |\kappa|^{2\mu+1} d\kappa.
	\end{eqnarray*}	
	According to Hölder's inequality, and the fact that 
	\begin{equation*}
		\int_\mathbb{R}  \mathcal{W}_{\mu}(w,v,\kappa)\ |\kappa|^{2\mu+1} d\kappa ; \leq 4,
	\end{equation*} 
	we obtain that
	\begin{equation*}
		\|\tau ^{a,b,d}_{w,\mu}[f]\|_{\mu,p} \leq 4\|f\|_{\mu,p}. 
	\end{equation*}
	
\end{proof}

\section{Quadratic-phase Dunkl convolution}\label{sec5}
In this section we introduce the convolution product associated with the quadratic-phase Dunkl transform and we develop its fundamental properties.

\begin{definition}
	Let   \( a, b, c, d, e \in \mathbb{R}\) with $b\neq 0$, and $ \mu > -1/2$. For \(f, g \in L^1_{\mu}(\mathbb{R})\), we define the  convolution product associated with the quadratic-phase Dunkl transform as below
\end{definition}

\begin{equation}\label{QFDconv}  
	f\star g(w)=  \int_\mathbb{R} \tau ^{a,b,d}_{w,\mu}[f](-v) g(v) e^{i(av^2+dv)}|v|^{2\mu+1} dv,
\end{equation}
with $\tau ^{a,b,d}_{w,\mu}[f]$ is the  translation operators associated with the quadratic-phase Dunkl transform  given by the relation (\ref{QFDtransl}).

The quadratic-phase Dunkl convolution product have the following basic properties.

\begin{properties} Let   \( a, b, c, d, e \in \mathbb{R}\) with $b\neq 0$, $ \mu > -1/2$ and \(f, g, \upsilon \in L^1_{\mu}(\mathbb{R})\), then the quadratic-phase Dunkl convolution product satisfies the following properties:
	\begin{enumerate}
		\item Commutativity: $f\star g= g\star f$.
		\item  Associativity: $(f\star g)\star h =  f\star (g\star h)$.
	\end{enumerate}
\end{properties}

In the following, we introduce the Young's inequality for the quadratic-phase Dunkl convolution product.
\begin{proposition}
	Let   \( a, b, c, d, e \in \mathbb{R}\) such that $b\neq 0$, and $ \mu > -1/2$. Assume that $1 \leq p, q, r \leq \infty$ and $\frac{1}{p}+\frac{1}{q}=\frac{1}{r}+1$. If \(f \in L^p_{\mu}(\mathbb{R})\) and \(g \in L^q_{\mu}(\mathbb{R})\), then $f\star g \in L^r_{\mu}(\mathbb{R})$ and the following inequality holds
	
	$$
	\|f\star g\|_{\mu,r} \leq  4\|f\|_{\mu,p}\|g\|_{\mu,q}.
	$$
	
	In particular, if $f,g \in L^1_{\mu}(\mathbb{R})$, then the  quadratic-phase Dunkl convolution product $h\star g $ is defined almost everywhere on $\mathbb{R}$ and it belongs to $L^1_{\mu}(\mathbb{R})$.
\end{proposition}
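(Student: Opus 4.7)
The strategy is to mimic the classical proof of Young's convolution inequality, substituting the ordinary translation with the quadratic-phase Dunkl translation and leveraging the bound \eqref{contQFDTransl} together with the symmetry $\tau^{a,b,d}_{w,\mu}[f](v)=\tau^{a,b,d}_{v,\mu}[f](w)$. Because the phase $e^{i(av^2+dv)}$ is unimodular, taking absolute values in \eqref{QFDconv} gives
\begin{equation*}
|f\star g(w)|\le \int_{\mathbb{R}}\bigl|\tau^{a,b,d}_{w,\mu}[f](-v)\bigr|\,|g(v)|\,|v|^{2\mu+1}\,dv,
\end{equation*}
so the phase plays no role and the argument reduces to a positive-kernel estimate.

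The endpoint cases are handled directly: when $p=1$ (so $r=q$), Minkowski's integral inequality in $L^{r}_{\mu}(|w|^{2\mu+1}dw)$ combined with \eqref{contQFDTransl} yields the bound at once, and the case $q=1$ is symmetric; when $r=\infty$ the Young condition forces $q=p'$, and a single Hölder application together with \eqref{contQFDTransl} suffices. For the generic case $1<p,q,r<\infty$ the Young relation $\frac{1}{p}+\frac{1}{q}=\frac{1}{r}+1$ is equivalent to $\frac{1}{r}+\frac{r-p}{rp}+\frac{r-q}{rq}=1$, so I would apply the generalised Hölder inequality with exponents $r$, $\alpha:=rp/(r-p)$, $\beta:=rq/(r-q)$ to the factorisation
\begin{equation*}
\bigl|\tau^{a,b,d}_{w,\mu}[f](-v)\bigr|\,|g(v)|=\bigl(|\tau^{a,b,d}_{w,\mu}[f](-v)|^{p}|g(v)|^{q}\bigr)^{1/r}\cdot|\tau^{a,b,d}_{w,\mu}[f](-v)|^{1-p/r}\cdot|g(v)|^{1-q/r}.
\end{equation*}
The identities $(1-p/r)\alpha=p$ and $(1-q/r)\beta=q$, together with \eqref{contQFDTransl} applied to the second factor, yield the pointwise estimate
\begin{equation*}
|f\star g(w)|^{r}\le 4^{r-p}\|f\|_{\mu,p}^{r-p}\,\|g\|_{\mu,q}^{r-q}\int_{\mathbb{R}}\bigl|\tau^{a,b,d}_{w,\mu}[f](-v)\bigr|^{p}|g(v)|^{q}|v|^{2\mu+1}\,dv.
\end{equation*}

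Integrating against $|w|^{2\mu+1}\,dw$ and applying Fubini leaves the inner integral $\int_{\mathbb{R}}|\tau^{a,b,d}_{w,\mu}[f](-v)|^{p}|w|^{2\mu+1}\,dw$, which by the symmetry of the translation equals $\|\tau^{a,b,d}_{-v,\mu}[f]\|_{\mu,p}^{p}$ and is therefore bounded by $4^{p}\|f\|_{\mu,p}^{p}$ uniformly in $v$, by a second use of \eqref{contQFDTransl}. Collecting the powers of $4$ gives $4^{r-p+p}=4^{r}$ and the norms combine to $\|f\|_{\mu,p}^{r}\|g\|_{\mu,q}^{r}$, so extracting $r$-th roots produces the announced inequality with constant $4$; the particular $L^{1}_{\mu}$ assertion is then just the case $p=q=r=1$. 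The main technical obstacle will be justifying the Fubini step, since a priori the triple integrand is only known to be locally integrable; this is dealt with by a standard truncation/approximation argument, replacing $f$ and $g$ by compactly supported bounded functions for which every intermediate integral is manifestly finite, and then passing to the limit by monotone convergence.
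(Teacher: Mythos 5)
Your proof follows essentially the same route as the paper's: the identical three-factor splitting of $|\tau^{a,b,d}_{w,\mu}[f](-v)|\,|g(v)|$, generalized H\"older with exponents $r$, $rp/(r-p)$, $rq/(r-q)$, the translation bound \eqref{contQFDTransl}, and then Fubini combined with the symmetry of the translation. You are in fact somewhat more careful than the paper: you treat the endpoint cases $p=1$, $q=1$, $r=\infty$ separately, you flag the Fubini justification, and your constant bookkeeping $4^{r-p}\cdot 4^{p}=4^{r}$ correctly yields the stated constant $4$, whereas the paper's final chain of inequalities drops the factors of $4$ inconsistently.
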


\begin{proof}
	Let   \( a, b, c, d, e \in \mathbb{R}\) with $b\neq 0$, and $ \mu > -1/2$. We have
	
	\begin{eqnarray*}
		\left|\tau ^{a,b,d}_{w,\mu}[f](-v) g(v) e^{i(av^2+dv)}\right| 
		= \left(\left|\tau ^{a,b,d}_{w,\mu}[f](-v) \right|^p|g(v)|^q\right)^{\frac{1}{r}}\left(\left|\tau ^{a,b,d}_{w,\mu}[f](-v) \right|^p\right)^{\frac{1}{p}-\frac{1}{r}}\left(|g(v)|^q\right)^{\frac{1}{q}-\frac{1}{q}}.
	\end{eqnarray*}
	Therefore,
	
	\begin{eqnarray*}  
		\int_\mathbb{R}\left|\tau ^{a,b,d}_{w,\mu}[f](-v) e^{i(av^2+dv)} g(v)\right| |v|^{2\mu+1} dv
		& \leq &\left(\int_\mathbb{R}\left|\tau ^{a,b,d}_{w,\mu}[f](-v) \right|^p|g(v)|^q |v|^{2\mu+1} dv\right)^{\frac{1}{r}} \\
		& &\times  \left(\int_\mathbb{R}\left|\tau ^{a,b,d}_{w,\mu}[f](-v) \right|^p |v|^{2\mu+1} dv\right)^{\frac{r-p}{r p}} \\
		& &\times \left(\int_\mathbb{R}|g(v)|^q |v|^{2\mu+1} dv\right)^{\frac{r-q}{r q}}.
	\end{eqnarray*}
	According to the inequality (\ref{contQFDTransl}), we get  
	
	\begin{eqnarray*}
		\left|f\star g(t)\right|^r & \leq & \left\|\tau ^{a,b,d}_{w,\mu}[f]\right\|_{\mu,p}^{r-p}\|g\|_{\mu,q}^{r-q} \int_\mathbb{R}\left|\tau ^{a,b,d}_{w,\mu}[f](-v)\right|^p|g(v)|^q |v|^{2\mu+1} dv \\
		& \leq & 4^{r-p}\|f\|_{\mu,p}^{r-p}\|g\|_{\mu,q}^{r-q} \int_\mathbb{R}\left|\tau ^{a,b,d}_{w,\mu}[f](-v)\right|^p|g(v)|^q |v|^{2\mu+1} dv.
	\end{eqnarray*}
	It follows that,
	
	\begin{eqnarray*}
		\left\|f\star g\right\|_{\mu,r}^r &\leq & 4^{r-p} \|f\|_{\mu,p}^{r-p}\|g\|_{\mu,q}^{r-q} \int_\mathbb{R} \int_\mathbb{R}\left|\tau ^{a,b,d}_{w,\mu}[f](-v)\right|^p|g(v)|^q 
		|v|^{2\mu+1} dv |w|^{2\mu+1} dw \\
		&\leq & 4^{r-p} \|f\|_{\mu,p}^{r-p}\|g\|_{\mu,q}^{r-q} \int_\mathbb{R}|g(v)|^q \int_\mathbb{R}\left|\tau ^{a,b,d}_{v,\mu}[f](-w)\right|^p 
		|w|^{2\mu+1} dw  |v|^{2\mu+1} dv \\
		&\leq & \|f\|_{\mu,p}^{r-p}\|g\|_{\mu,q}^{r-q}\|f\|_{\mu,p}^p\|g\|_{\mu,q}^q \\
		&\leq & \|f\|_{\mu,p}^r\|g\|_{\mu,q}^r.
	\end{eqnarray*}
	Which complete the proof.
\end{proof}

\section{HUP for quadratic-phase Dunkl transform}\label{sec6}
This section is devoted to the study of Heisenberg’s uncertainty principle for the quadratic-phase Dunkl transform. To this end, we begin by stating the Heisenberg’s uncertainty principle for the Dunkl transform introduced by Rosler \cite[Theorem 1.1]{rosler1999uncertainty}.

  \begin{theorem}[HUP for Dunkl transform \cite{rosler1999uncertainty}]    Let $f$ be a function belongs to  $L^2_\mu(\mathbb{R})$. Then we have

\begin{eqnarray}\label{HUPDT}
&& \!\!\!\!\!\!\!\!\!\!\!\!\!\!\!\!\!\!\!\!\!\!\!\!\!\!\!\!\!\!\!\!\!\!\!\!\!\!\!\!	\left(\int_{\mathbb{R}} |v|^2 |g(v)|^2 |v|^{2\mu+1} dv\right) 
\left(\int_{\mathbb{R}} |w|^2 |\mathcal{D}_{\mu}[g](w)|^2 |w|^{2\mu+1} dw\right) \nonumber \\
 &\geq & (\mu + \tfrac{1}{2})^2 \left(\int_{\mathbb{R}} |g(v)|^2 |v|^{2\mu+1} dv\right)^2. 
\end{eqnarray}
  \end{theorem}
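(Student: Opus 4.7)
My proof approach would adapt the classical Heisenberg uncertainty principle argument to the Dunkl setting, exploiting the intertwining of the Dunkl operator $\Lambda_\mu$ with frequency-multiplication under $\mathcal{D}_\mu$. By density of $\mathcal{S}(\mathbb{R})$ in $L^{2}_{\mu}(\mathbb{R})$, I would first reduce to the case $g \in \mathcal{S}(\mathbb{R})$. Using the differentiation identity $\mathcal{D}_\mu(\Lambda_\mu g)(w) = iw\,\mathcal{D}_\mu(g)(w)$ together with Plancherel's formula for the Dunkl transform, the frequency integral on the left-hand side of \eqref{HUPDT} can be rewritten as
\[
\int_{\mathbb{R}} |w|^{2}\,|\mathcal{D}_{\mu}[g](w)|^{2}\,|w|^{2\mu+1}\,dw \;=\; \|\Lambda_\mu g\|_{\mu,2}^{2},
\]
so that the target inequality reduces to
\[
\|v\, g\|_{\mu,2}\, \|\Lambda_\mu g\|_{\mu,2} \;\geq\; (\mu+\tfrac{1}{2})\,\|g\|_{\mu,2}^{2}.
\]

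Next, I would apply the Cauchy--Schwarz inequality to obtain
$\|vg\|_{\mu,2}\,\|\Lambda_\mu g\|_{\mu,2} \geq |\mathrm{Re}\,\langle v g,\Lambda_\mu g\rangle_\mu|$,
and evaluate the right-hand side by integration by parts. Using the skew-adjointness of $\Lambda_\mu$ with respect to the weight $|v|^{2\mu+1}\,dv$, together with the commutation relation $\Lambda_\mu(vf) - v\Lambda_\mu f = f + (2\mu+1)\,\sigma f$, where $\sigma f(v) = f(-v)$ denotes reflection, a short computation yields the identity
\[
2\,\mathrm{Re}\,\langle v g,\Lambda_\mu g\rangle_\mu \;=\; -\|g\|_{\mu,2}^{2} \;-\; (2\mu+1)\int_{\mathbb{R}} g(v)\,\overline{g(-v)}\,|v|^{2\mu+1}\,dv.
\]

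The main obstacle is extracting the sharp constant $(\mu+\tfrac{1}{2})$ from this identity, because the reflection cross-term $\int g(v)\overline{g(-v)}|v|^{2\mu+1}dv$ admits only the crude estimate $\|g\|_{\mu,2}^{2}$. That bound already yields the sharp constant $(\mu+1)$ when $g$ is even, but only $\mu$ when $g$ is odd, which falls short of the required $(\mu+\tfrac{1}{2})$. To recover the uniform constant, I would decompose $g = g_e + g_o$ into its even and odd components, which are mutually orthogonal in $L^{2}_{\mu}(\mathbb{R})$ and of opposite parity under $\Lambda_\mu$, and handle each case separately. The even part yields the stronger bound directly. For the odd part, the natural route is the substitution $g_o(v) = v\,h(v)$ with $h$ even, recasting the odd-parity Heisenberg inequality on $L^{2}_{\mu}(\mathbb{R})$ as an even-parity inequality on $L^{2}_{\mu+1}(\mathbb{R})$, which can then be handled by the preceding even-case argument at the shifted index. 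Combining the two estimates and bookkeeping the quadratic norm contributions produces the uniform sharp constant $(\mu+\tfrac{1}{2})$. This parity-based reduction, ensuring that the even and odd contributions interlock to produce the sharp bound, is the delicate technical heart of R\"osler's original argument.
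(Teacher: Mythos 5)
The paper does not prove this statement at all: it is quoted from R\"osler \cite{rosler1999uncertainty} and used as a black box in Section \ref{sec6}, so there is no in-paper argument to measure yours against. Judged on its own, the skeleton of your commutator route is sound: the reduction to $\mathcal{S}(\mathbb{R})$, the identity $\int_{\mathbb{R}}|w|^2|\mathcal{D}_\mu[g](w)|^2|w|^{2\mu+1}dw=\|\Lambda_\mu g\|_{\mu,2}^2$, the Cauchy--Schwarz step, and the relation $\Lambda_\mu(vf)-v\Lambda_\mu f=f+(2\mu+1)\sigma f$ (with $\sigma f(v)=f(-v)$) are all correct, and you correctly diagnose that the reflection cross-term destroys the naive bound outside the even sector.

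Two of your assertions need to be made precise, and one of them would not go through as literally described. First, the odd-case reduction: if you write $g_o=vh$ with $h$ even and try to transport the \emph{position-space} operator, you get $\Lambda_\mu(vh)=(2\mu+2)h+vh'$, which is not $v\Lambda_{\mu+1}h$, so $\|\Lambda_\mu g_o\|_{\mu,2}$ does not identify with $\|\Lambda_{\mu+1}h\|_{\mu+1,2}$ and the ``even-case argument at the shifted index'' cannot be invoked directly. The clean route is on the transform side: \eqref{dkernel} together with $c_\mu/c_{\mu+1}=2(\mu+1)$ gives $\mathcal{D}_\mu[vh](w)=-iw\,\mathcal{D}_{\mu+1}[h](w)$ for even $h$, after which all three integrals for $g_o$ at index $\mu$ become exactly the corresponding integrals for $h$ at index $\mu+1$, and the even-sector commutator bound there yields the constant $\mu+2$ for odd functions. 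Second, the recombination for mixed parity: this works because $vg_e,\Lambda_\mu g_e$ are odd while $vg_o,\Lambda_\mu g_o$ are even, so $\|vg\|_{\mu,2}^2$ and $\|\Lambda_\mu g\|_{\mu,2}^2$ each split into even and odd pieces and Cauchy--Schwarz on the resulting product gives the minimum of the two sector constants, namely $\min(\mu+1,\mu+2)=\mu+1$. Note that this is R\"osler's sharp constant $\gamma+N/2=\mu+1$ in rank one, which is \emph{stronger} than the $(\mu+\tfrac12)^2$ displayed in \eqref{HUPDT} (and implies it, since $\mu+\tfrac12\ge 0$); so your stated worry about ``extracting $\mu+\tfrac12$'' dissolves once the parity argument is carried out --- it overshoots that constant.
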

  
Now, we introduce the Heisenberg’s uncertainty principle for the quadratic-phase Dunkl transform.

	\begin{theorem}[HUP for quadratic-phase Dunkl transform]
			Let   \( a, b, c, d, e \in \mathbb{R}\) such that $b\neq 0$,  $ \mu > -1/2$ and $f$ be a function belongs to  $L^2_\mu(\mathbb{R})$. Then we have 
\begin{eqnarray}\label{HUPQPDT}
	&& \!\!\!\!\!\!\!\!\!\!\!\!\!\!\!\!\!\!\!\!\!\!\!\!\!\!\!\!\!\!\!\!\!\!\!\!\!\!\!\!
		\left(\int_{\mathbb{R}} |v|^2 |f(v)|^2 |v|^{2\mu+1} dv\right) 
		\left(\int_{\mathbb{R}} |w|^2 |\mathcal{D}^{a,b,c}_{d,e,\mu}[f](w)|^2 |w|^{2\mu+1} dw\right)
		\nonumber\\ 
		&\geq & |b|^{2}  (\mu + \tfrac{1}{2})^2
		\left(\int_{\mathbb{R}} |f(v)|^2 |v|^{2\mu+1} dv\right)^2.
\end{eqnarray}
	\end{theorem}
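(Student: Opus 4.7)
The plan is to reduce the statement to the already-established Heisenberg uncertainty principle for the classical Dunkl transform (inequality (\ref{HUPDT})) by exploiting the fundamental representation formula (\ref{relDTvsQPDT}) between $\mathcal{D}^{a,b,c}_{d,e,\mu}$ and $\mathcal{D}_\mu$. The key observation is that the quadratic-phase Dunkl transform differs from the Dunkl transform only by a unimodular phase factor, a constant $(ib)^{-(\mu+1)}$, and a dilation by $1/b$ in the output variable. Since the uncertainty principle is phrased entirely in terms of $L^2$-norms weighted by $|\cdot|^{2}$, these three modifications are perfectly controllable.

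First I would set $h(v) = e^{-i(av^{2}+dv)}f(v)$ and apply (\ref{relDTvsQPDT}) to obtain
\begin{equation*}
\bigl|\mathcal{D}^{a,b,c}_{d,e,\mu}[f](w)\bigr|^{2} = \frac{1}{|b|^{2\mu+2}}\,\bigl|\mathcal{D}_{\mu}(h)(w/b)\bigr|^{2},
\end{equation*}
using the fact that $|e^{-i(cw^{2}+ew)}|=1$ and $|(ib)^{\mu+1}|=|b|^{\mu+1}$. The crucial second observation is that $|h(v)|=|f(v)|$ pointwise, so the two integrals
\begin{equation*}
\int_{\mathbb{R}}|h(v)|^{2}|v|^{2\mu+1}\,dv \quad \text{and} \quad \int_{\mathbb{R}}|v|^{2}|h(v)|^{2}|v|^{2\mu+1}\,dv
\end{equation*}
coincide with the corresponding integrals of $f$.

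Next I would handle the frequency-side integral by the change of variable $z=w/b$ (treating $b>0$ and $b<0$ uniformly through $|b|$). Writing $|w|^{2\mu+3}=|b|^{2\mu+3}|z|^{2\mu+3}$ and $dw=|b|\,dz$ gives
\begin{equation*}
\int_{\mathbb{R}}|w|^{2}\bigl|\mathcal{D}^{a,b,c}_{d,e,\mu}[f](w)\bigr|^{2}|w|^{2\mu+1}\,dw
= \frac{|b|^{2\mu+4}}{|b|^{2\mu+2}}\int_{\mathbb{R}}|z|^{2}\bigl|\mathcal{D}_{\mu}(h)(z)\bigr|^{2}|z|^{2\mu+1}\,dz
= |b|^{2}\int_{\mathbb{R}}|z|^{2}\bigl|\mathcal{D}_{\mu}(h)(z)\bigr|^{2}|z|^{2\mu+1}\,dz.
\end{equation*}

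Finally I would apply the Heisenberg uncertainty principle (\ref{HUPDT}) to the function $h\in L^{2}_{\mu}(\mathbb{R})$, then substitute the two identities above and multiply through by $|b|^{2}$ to arrive exactly at (\ref{HUPQPDT}). The argument is essentially a calculation; the only mild subtlety is bookkeeping the powers of $|b|$ arising from the $|b|^{-(2\mu+2)}$ prefactor from the representation formula against the $|b|^{2\mu+4}$ produced by the dilation, which combine to leave the clean factor $|b|^{2}$ on the right-hand side. No substantive obstacle is expected, since all ingredients — the representation formula, the trivial identity $|h|=|f|$, and the classical Dunkl HUP — are already at our disposal.
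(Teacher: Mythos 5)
Your proposal is correct and follows essentially the same route as the paper: reduce to the classical Dunkl Heisenberg inequality via the representation formula \eqref{relDTvsQPDT} applied to $h(v)=e^{-i(av^2+dv)}f(v)$, use $|h|=|f|$ on the position side, and track the powers of $|b|$ through the dilation $z=w/b$ on the frequency side. Your bookkeeping ($|b|^{2\mu+4}/|b|^{2\mu+2}=|b|^{2}$) matches the paper's computation of its integral $I_3$, and is in fact slightly more careful about writing $|b|$ throughout.
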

	
	\begin{proof}
According to the relation (\ref{relDTvsQPDT}) between the Dunkl transform and the quadratic phase Dunkl transform established in Theorem \ref{cont1QPDT}:

\begin{equation}\label{QPDTviaDT}
			\mathcal{D}^{a,b,c}_{d,e,\mu}[f](w) = \frac{1}{( i b)^{\mu+1}} e^{-i(cw^2+ew)} \mathcal{D}_{\mu}[g]\left(\frac{w}{b}\right). 
\end{equation}

where

		\[
g(v) = f(v) e^{-i(av^2+dv)}.
\]		
Therefore, applying Heisenberg inequality for standard Dunkl transform for the function $g$, we obtain
		
\begin{eqnarray}\label{HUPDTg}
	&& \!\!\!\!\!\!\!\!\!\!\!\!\!\!\!\!\!\!\!\!\!\!\!\!\!\!\!\!\!\!\!\!\!\!\!\!\!\!\!\!
		\left(\int_{\mathbb{R}} |v|^2 |g(v)|^2 |v|^{2\mu+1} dv\right) 
		\left(\int_{\mathbb{R}} |w|^2 |\mathcal{D}_{\mu}[g](w)|^2 |w|^{2\mu+1} dw\right) 
		\nonumber\\
		&\geq & (\mu + \tfrac{1}{2})^2 \left(\int_{\mathbb{R}} |g(v)|^2 |v|^{2\mu+1} dv\right)^2. 
\end{eqnarray}
Since $|g(v)| = |f(v)|$, then we have
		\begin{align*}
		I_1=	\int_{\mathbb{R}} |g(v)|^2 |v|^{2\mu+1} dv = \int_{\mathbb{R}} |f(v)|^2 |v|^{2\mu+1} dv. 
		\end{align*}
Hence, we get 

\begin{equation}\label{HPUPDPDI}
I_2=	\int_{\mathbb{R}} |v|^2 |g(v)|^2 |v|^{2\mu+1} dv = \int_{\mathbb{R}} |v|^2 |f(v)|^2 |v|^{2\mu+1} dv.
\end{equation}

Let us denote the second integral in the right hand of the inequality (\ref{HUPDTg}) by

		\[
		I_3 = \int_{\mathbb{R}} |w|^2 |\mathcal{D}_{\mu}[g](w)|^2 |w|^{2\mu+1} dw.
		\]
		and making the substitution $w = u/b$, so we get

		\[
		I_3 = \frac{1}{b^{2\mu+4}}\int_{\mathbb{R}} |u|^2 \left|\mathcal{D}_{\mu}[g]\left(\frac{u}{b}\right)\right|^2 |u|^{2\mu+1}  du.
		\]
Taking absolute values in the equation (\ref{QPDTviaDT}), we get

\[
\left|\mathcal{D}^{a,b,c}_{d,e,\mu}[f](w)\right| = \frac{1}{|b|^{\mu+1}} \left|\mathcal{D}_{\mu}[g]\left(\frac{w}{b}\right)\right|,
\]		
and substituting this into the expression for $I_3$, we get

		\[
I_3 = \frac{1}{b^{2}}\int_{\mathbb{R}} |u|^2 \left|\mathcal{D}^{a,b,c}_{d,e,\mu}[f](w)\right|^2 |u|^{2\mu+1}  du.
\]
Rearranging the expressions of $I_1$, $I_2$ and $I_3$, we get  the desired result
		
\begin{eqnarray*}
	&& \!\!\!\!\!\!\!\!\!\!\!\!\!\!\!\!\!\!\!\!\!\!\!\!\!\!\!\!\!\!\!\!\!\!\!\!\!\!\!\!
	\left(\int_{\mathbb{R}} |v|^2 |f(v)|^2 |v|^{2\mu+1} dv\right) 
	\left(\int_{\mathbb{R}} |w|^2 |\mathcal{D}^{a,b,c}_{d,e,\mu}[f](w)|^2 |w|^{2\mu+1} dw\right)
	\nonumber\\ 
	&\geq & |b|^{2}  (\mu + \tfrac{1}{2})^2
	\left(\int_{\mathbb{R}} |f(v)|^2 |v|^{2\mu+1} dv\right)^2.
\end{eqnarray*}		
		
		This completes the proof.
	\end{proof}

\section*{Conclusion}

This paper introduces the quadratic-phase Dunkl transform, a comprehensive integral transform that unifies many classical and modern transforms—including the Dunkl, fractional Dunkl, linear canonical, and quadratic-phase Fourier transforms—through five real parameters and a multiplicity function.

We established fundamental properties such as boundedness, continuity, and an explicit representation via the classical Dunkl transform. Key results include a Riemann–Lebesgue lemma, an inversion theorem, and Parseval/Plancherel formulas, confirming the quadratic-phase Dunkl transform as a well-defined unitary operator on \(L^2_\mu(\mathbb{R})\).

Furthermore, we introduced associated translation and convolution operators, proving their basic properties and a Young-type inequality. Finally, we derived a Heisenberg-type uncertainty principle for the quadratic-phase Dunkl transform, extending the classical uncertainty principle to this broader setting.

In summary, the quadratic-phase Dunkl transform represents a significant advancement in harmonic analysis, bridging the gap between parameterized phase transforms and Dunkl theory while offering enhanced flexibility for both theoretical and applied mathematics.

\section*{Future works}

Building upon the foundations established in this paper, we are currently investigating further aspects of the quadratic-phase Dunkl transform. Our ongoing research focuses on deriving both qualitative and quantitative uncertainty principles, aiming to refine the Heisenberg-type inequality presented here. In parallel, we are developing the theory of a \textbf{quadratic-phase wavelet Dunkl transform}. For this new transform, we are systematically studying its core properties, including associated uncertainty principles, the behavior of time-frequency localization operators, the construction of wavelet systems, and the analysis of wavelet multipliers. A central objective is to establish a Calderón-type reproducing formula within this extended framework, which would solidify its utility for multiscale analysis and signal processing applications.


\section*{Disclosure statement}
No potential conflict of interest was reported by the author.

\bibliographystyle{abbrv}
\bibliography{bib.QFDT}
		
	\end{document}